\newcommand{\Mod}[1]{\ (\mathrm{mod}\ #1)}
\newcommand{\Z}{\mathbb{Z}}
\newcommand{\ga}{\Gamma}
\newcommand{\ve}{\varepsilon}
\newcommand{\gp}[2]{\langle #1 \, | \, #2 \rangle}
\newcommand{\sgp}[1]{\langle #1 \rangle}
\DeclareMathOperator{\fl}{FL}\DeclareMathOperator{\llt}{LL}
\DeclareMathOperator{\supp}{Supp}
\DeclareMathOperator{\lk}{Lk}
\DeclareMathOperator{\st}{St}
\DeclareMathOperator{\Star}{St}
\def\coloneqq{\mathrel{\mathop\mathchar"303A}\mkern-1.2mu=}
\newcommand{\myRed}[1]{\textbf{\textcolor{red}{#1}}}
\newcommand{\comm}[1]{}
\begin{document}

\title{Conjugacy problem in T-RAAGs}
\author{Gemma Crowe and Islam Foniqi}

\date{}
\maketitle

\theoremstyle{plain}
\newtheorem{theorem}{Theorem}[section]
\newtheorem{conj}[theorem]{Conjecture}
\newtheorem{notation}[theorem]{Notation}
\newtheorem*{note}{Note}
\theoremstyle{definition}

\setlength{\parindent}{0em}
\setlength{\parskip}{0.5em} 
\author{}

\newaliascnt{conjecture}{theorem}
\newtheorem{conjecture}[conjecture]{Conjecture}
\aliascntresetthe{conjecture}
\providecommand*{\conjectureautorefname}{Conjecture}

\newaliascnt{lemma}{theorem}
\newtheorem{lemma}[lemma]{Lemma}
\aliascntresetthe{lemma}
\providecommand*{\lemmaautorefname}{Lemma}

\newaliascnt{cor}{theorem}
\newtheorem{cor}[cor]{Corollary}
\aliascntresetthe{cor}
\providecommand*{\corautorefname}{Corollary}

\newaliascnt{claim}{theorem}
\newtheorem{claim}[claim]{Claim}

\newaliascnt{notation}{theorem}
\aliascntresetthe{notation}
\providecommand*{\notationautorefname}{Notation}

\aliascntresetthe{claim}
\providecommand*{\claimautorefname}{Claim}

\newaliascnt{remark}{theorem}
\newtheorem{remark}[remark]{Remark}
\aliascntresetthe{remark}
\providecommand*{\remarkautorefname}{Remark}

\newtheorem*{claim*}{Claim}
\theoremstyle{definition}

\newaliascnt{definition}{theorem}
\newtheorem{definition}[definition]{Definition}
\aliascntresetthe{definition}
\providecommand*{\definitionautorefname}{Definition}

\newaliascnt{prop}{theorem}
\newtheorem{prop}[prop]{Proposition}
\aliascntresetthe{prop}
\providecommand*{\propautorefname}{Proposition}

\newaliascnt{example}{theorem}
\newtheorem{example}[example]{Example}
\aliascntresetthe{example}
\providecommand*{\exampleautorefname}{Example}

\newaliascnt{question}{theorem}
\newtheorem{question}[question]{Question}
\aliascntresetthe{question}
\providecommand*{\questionautorefname}{Question}

\newaliascnt{maintheorem}{theorem}
\newtheorem{maintheorem}[maintheorem]{Theorem}
\aliascntresetthe{maintheorem}
\providecommand*{\maintheoremautorefname}{Theorem}
\renewcommand{\themaintheorem}{\Alph{maintheorem}}

\begin{abstract}
In this paper, we construct an implementable algorithm which solves the conjugacy problem in twisted right-angled Artin groups (T-RAAGs). In certain cases, the complexity is known to be linear, by reducing the problem to the twisted conjugacy problem in right-angled Artin groups. We also show that T-RAAGs are biautomatic, providing an alternative solution to the conjugacy problem. \\

2020 Mathematics Subject Classification: 20F10, 20F36
\end{abstract}
\unmarkedfntext{\emph{Keywords}: Conjugacy problem, twisted right-angled Artin groups, biautomaticity}

\section{Introduction}\label{Introduction}
In geometric group theory, the class of Artin groups plays a major role, with many long standing questions, such as decidability of the word problem, remaining open in general. One well-studied subclass of Artin groups is the class of \emph{right-angled Artin groups} (RAAGs). Also known as graph groups, RAAGs are defined by a finite simplicial graph, where each edge of the graph represents a commutation relation; see \citep{charney2007introduction} for a survey on RAAGs. The notion of a \emph{twisted Artin group} was introduced in \citep{clancy2010homology}; this class of groups also appears in \citep{pride1986tits} as a subclass of \emph{generalized Artin groups}. This article explores the subclass of \emph{twisted right-angled Artin groups} (T-RAAGs) as a natural extension of RAAGs, where we also allow directed edges in our defining graph. 

Similar to RAAGs, the presentation of T-RAAGs is given by generators and relations: there are finitely many generators and there is at most one relation between any two distinct generators~$a,b$. This relation can be either a commutation~$ab = ba$, or a so-called {\it Klein relation}~$aba = b$. This presentation can be encoded by the use of mixed graphs, where vertices represent the generators of the group, while edges give rise to the relations. An undirected edge, connecting generators~$a$ and~$b$, gives rise to the commutation relation~$ab = ba$, while a directed edge with origin at~$a$ and terminus at~$b$ defines the Klein relation~$aba = b$. If generators~$a,b$ are not connected by an edge, they are free of relations.

As basic and important examples of T-RAAGs, we have both the fundamental groups of a torus:~$\Z^2 = \gp{a, b}{ab = ba}$, and of the Klein bottle:~$K = \gp{a, b}{aba = b}$. Note that~$K$ is not a RAAG, because RAAGs are bi-orderable \citep{Duchamp1992}, but~$K$ is not. We refer to T-RAAGs also with the name \emph{twisted graph groups}. More details regarding their normal forms and other properties can be found in \citep[Chapter~3]{foniqi2022}, and also in \citep{foniqi2024twisted}. 

The focus of this paper is decision problems in T-RAAGs. We recall the three main problems in group theory here:
\begin{itemize}
    \item[$(i)$] \emph{Word problem}: For a group~$G$ generated by a set~$X$, does there exists an algorithm to decide whether~$w \in (X \sqcup X^{-1})^{\ast}$ represents the identity element of~$G$? 
    \item[$(ii)$] \emph{Conjugacy problem}: For a group~$G$ generated by a set~$X$, does there exists an algorithm to decide whether any two words~$u,v \in \left(X \sqcup X^{-1}\right)^{\ast}$ represent conjugate elements in~$G$?
    \item[$(iii)$] \emph{Isomorphism problem}: Given two finite group presentations~$\langle X \mid R \rangle$ and~$\langle Y \mid S \rangle$, does there exists an algorithm to decide whether these presentations represent isomorphic groups?
\end{itemize}
All three of these problems are decidable for RAAGs. Indeed the first two problems are solvable in linear time by \citep*{CGW}, and the third problem is solvable by \citep{Droms1987}. It is natural therefore to ask whether any of these three problems are solvable for T-RAAGs. We note that the word problem for T-RAAGs follows from the normal form theorem given in \citep{foniqi2022, foniqi2024twisted}. The isomorphism problem however remains wide open -- see \citep*{blumer2025droms} for a solution in a certain subclass of `Droms T-RAAGs'.
The following result constitutes the main contribution to this paper, in solving the second of these three decision problems.

\begin{maintheorem}[\cref{thm:CP solvable}]\label{thm:main result intro}
    {\it The conjugacy problem is decidable in T-RAAGs. }
\end{maintheorem}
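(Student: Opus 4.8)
I would give two arguments: a short conceptual one that already settles decidability, and a constructive one that yields the \emph{implementable} algorithm advertised in the introduction. The conceptual argument is immediate: since T-RAAGs are biautomatic (announced above and proved later in the paper), and biautomatic groups have solvable conjugacy problem by the work of Gersten and Short, the conjugacy problem in a T-RAAG is decidable. The real content of the plan is therefore an explicit procedure, which I would build by induction on the number of vertices of the defining mixed graph $\Gamma$ (write $G_\Gamma$ for the associated T-RAAG), using the fact that T-RAAGs admit the same kind of ``visual'' splittings over parabolic subgroups that RAAGs do.

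First the reductions and the base case. If $\Gamma$ is disconnected, $G_\Gamma$ is the free product of the T-RAAGs on its components, each on strictly fewer vertices, and conjugacy in a free product is decidable once it is decidable in the factors (cyclically reduce in the free-product normal form and compare up to cyclic permutation); so we may assume $\Gamma$ connected. The base case is $\Gamma$ complete: these ``twisted abelian'' T-RAAGs are understood explicitly in \citep{foniqi2022,foniqi2024twisted} (and are in any case biautomatic), so their conjugacy problem is decidable. For the inductive step, choose a vertex $v$ that is not adjacent to all of $\Gamma$; then every defining relation involving $v$ is supported on the star $\st(v)=\{v\}\cup\lk(v)$ and every remaining relation on $\Gamma\setminus\{v\}$, so the presentation of $G_\Gamma$ is the union of those of $G_{\st(v)}$ and $G_{\Gamma\setminus v}$ and hence $G_\Gamma\cong G_{\Gamma\setminus v}\ast_{G_{\lk(v)}}G_{\st(v)}$, an amalgamated product over the parabolic $G_{\lk(v)}$, in which $\Gamma\setminus v$, $\st(v)$ and $\lk(v)$ all have strictly fewer vertices than $\Gamma$. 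Here I use that parabolic subgroups of a T-RAAG are again T-RAAGs on the corresponding full subgraph and embed, which I would extract from the normal-form theory of \citep{foniqi2022,foniqi2024twisted}.

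Now run the conjugacy procedure for an amalgamated product. Given two inputs, put them in cyclically reduced form relative to the splitting; then either both are conjugate into a vertex group, reducing the question to the conjugacy problem in $G_{\st(v)}$ or $G_{\Gamma\setminus v}$ together with the problem of deciding whether a vertex-group element is conjugate into the edge parabolic $G_{\lk(v)}$ (and producing a conjugator), or both are cyclically reduced of syllable length at least two, reducing the question to comparing the two cyclic sequences of syllables up to the natural equivalence, which calls for the membership problem for $G_{\lk(v)}$ inside the vertex groups and for a twisted-conjugacy/coset comparison inside $G_{\lk(v)}$ relative to the finitely many automorphisms induced by ambient generators. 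All of these auxiliary problems concern T-RAAGs on strictly smaller mixed graphs, so the induction closes provided the inductive hypothesis is strengthened to assert, simultaneously, decidability of the conjugacy problem, of the sub-T-RAAG membership problem, and of the twisted conjugacy problem relative to any automorphism induced by a generator — each of which, under the same visual splitting, again reduces to strictly smaller instances. The resulting procedure terminates and, once unwound, is implementable.

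The main obstacle is making the directed (Klein) edges cooperate with this scheme. Unlike in a RAAG, the vertex group $G_{\st(v)}$ is not the direct product $G_{\lk(v)}\times\Z$: the generator $v$ acts on $G_{\lk(v)}$, and — worse — $v$ need not normalize $G_{\lk(v)}$ at all, since if $v$ inverts a neighbour the relevant conjugate escapes the parabolic. Consequently the edge group does not sit inside the vertex groups in the tidy way the RAAG argument exploits, and the coset and twisted-conjugacy bookkeeping inside $G_{\lk(v)}$ has to be done carefully, carrying along with each parabolic the finite family of automorphisms that ambient generators induce on it. The bulk of the work — and what makes the algorithm genuinely explicit rather than merely existent via biautomaticity — is verifying that this enlarged package of twisted and relative problems is closed under the induction and estimating the complexity, which, as the introduction notes, collapses to linear in the favourable case where the T-RAAG is commensurable to a RAAG and one instead solves a twisted conjugacy problem there.
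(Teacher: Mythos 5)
Your first, conceptual argument is valid and is precisely the paper's \emph{alternative} route: \cref{sec:biauto} proves that T-RAAGs are biautomatic (\cref{thm:biauto}), and by Gersten--Short this settles decidability of conjugacy in one line, exactly as the introduction notes. The paper's primary proof of \cref{thm:CP solvable}, however, is entirely different from your constructive plan. It does not split $T(\Gamma)$ as an amalgam; instead it establishes a direct conjugacy criterion (\cref{prop:conj criteria}) in the style of Ferov's criterion for graph products: two cyclically reduced elements are conjugate if and only if they are related by a finite sequence of syllable shuffles, cyclic permutations, and full conjugate preserving $\varphi$-cyclic permutations, the last operation being the new ingredient that absorbs the Klein relations (e.g.\ $a\sim a^{-1}$ when $aba=b$). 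Since a cyclically reduced word has only a finite orbit under these three operations, the algorithm is simply: cyclically reduce both inputs and test membership of one in the finite orbit of the other. This buys a uniform, implementable procedure with no induction on the defining graph.

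Your second, inductive argument has genuine gaps which you flag but do not close. The visual splitting $G_\Gamma\cong G_{\Gamma\setminus v}\ast_{G_{\lk(v)}}G_{\st(v)}$ is fine, but the standard conjugacy algorithm for amalgamated products requires, in the case of cyclic syllable length at least two, deciding equality of cyclic sequences up to a chain of double-coset conditions in the edge group; when $v$ carries a Klein edge it does not normalize $G_{\lk(v)}$, so these conditions become twisted by automorphisms and by conjugators that leave the parabolic, and you never verify that your enlarged inductive package (conjugacy, parabolic membership, conjugacy into a parabolic, twisted conjugacy for induced automorphisms) is decidable and closed under the splitting --- this is exactly where the work lies, and it is asserted rather than proved. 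The base case of a complete mixed graph is also not trivial (a triangle of Klein relations is not a ``twisted abelian'' group in any evident sense) and is not handled. As written, only the biautomaticity argument constitutes a complete proof; the constructive half would need substantial further work, and the paper's conjugacy-criterion approach avoids these difficulties altogether.
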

We prove a conjugacy criterion for T-RAAGs (see \autoref{prop:conj criteria}) that extends the well-known result for RAAGs, which is that any two cyclically reduced elements in a RAAG are conjugate if and only if they are related by a sequence of commutation relations and cyclic permutations. For T-RAAGs, extra care is required to account for Klein relations. For example, if $aba = b$ in our T-RAAG, then $a$ and $a^{-1}$ represent conjugate elements, but are clearly not related by commutation relations or cyclic permutations. Our criterion is then used to construct an implementable solution to check when two elements in a T-RAAG are conjugate. 

An implementable linear time solution for the conjugacy problem in RAAGs was constructed by \citep*{CGW}, using a construction known as \emph{pilings}. This was adapted in \citep{Crowe2024} to solve the \emph{twisted conjugacy problem} for RAAGs in certain cases. This result can be used to prove linear time complexity for the conjugacy problem in T-RAAGs in certain cases.  

\begin{maintheorem}[\cref{thm:subcase result}]\label{thm:subcase result intro}
  {\it  Let $T(\ga)$ be a T-RAAG with generators $V\ga = \{x_{1}, \dots, x_{n}, y\}$, such that the only Klein relations are of the form $x_{i}yx_{i} = y$, where $i \in I$ for some subset $I \subseteq \{1, \dots, n\}$. Then the conjugacy problem in~$T(\Gamma)$ can be solved in linear time. }
\end{maintheorem}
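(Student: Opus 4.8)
The plan is to reduce the conjugacy problem in $T(\ga)$ to the twisted conjugacy problem in a right-angled Artin group, so that the linear-time solution of \citep{Crowe2024} can be applied directly. The key structural observation is that the given T-RAAG $T(\ga)$, in which all directed edges point into a single vertex $y$, is an ascending HNN-extension — or rather a semidirect product — of a RAAG by $\bZ$. Concretely, let $\Delta$ be the full subgraph of $\ga$ on $\{x_1,\dots,x_n\}$, so that $A(\Delta)$ is the RAAG on the undirected edges among the $x_i$, and let $N = \langle\!\langle y\rangle\!\rangle$ be the normal closure of $y$. I would first show that $N$ is itself a RAAG: the Klein relation $x_i y x_i = y$ rewrites as $x_i y x_i^{-1} = y^{-1}$, so conjugation by $x_i$ inverts $y$; setting $y_i := x_i y x_i^{-1}\cdots$ one checks that $N$ is free abelian (or a RAAG) on the orbit of $y$ under the $x_i$, and that $T(\ga) = N \rtimes A(\Delta)$ — or, after quotienting the sign data, that $T(\ga)$ fits into a short exact sequence $1 \to A(\Delta) \to T(\ga) \to \bZ/2\bZ$-type data, but more usefully that $T(\ga)$ splits as $R \rtimes \bZ$ or $R \rtimes A(\Delta)$ with $R$ a RAAG. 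The cleanest route: observe that $T(\ga)$ surjects onto the RAAG $A(\Delta)$ with kernel $N$, and that $N$ together with the RAAG structure realizes $T(\ga)$ as an iterated semidirect product $A(\Delta') \rtimes_\varphi \bZ^k$ where each generator acts by an order-$2$ graph automorphism (inverting $y$-type generators, fixing the rest).

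The second step is to invoke the standard fact (Gersten, Bogopolski–Martino–Ventura) relating the conjugacy problem in $G \rtimes_\varphi \bZ$ to the $\varphi$-twisted conjugacy problem in $G$: two elements $g\bar t^{\,a}, h\bar t^{\,b}$ are conjugate in $G\rtimes\bZ$ iff $a = b$ and $g$ is $\varphi^a$-twisted-conjugate to $h$ in $G$. Since the relevant automorphisms here are the sign-change automorphisms of a RAAG (sending some standard generators to their inverses), which are exactly the kind of graph-automorphism-induced maps handled in \citep{Crowe2024}, and since that paper's algorithm runs in linear time via pilings, this yields a linear-time decision procedure. One then assembles: given $u, v$ as words in $V\ga^{\pm}$, compute in linear time their images in $A(\Delta) \cong \bZ$-exponent-of-$y$... — more precisely, read off the total $\bZ$-coordinate(s) (the exponent sums in the $x_i$ on the $A(\Delta)$-quotient, which must match), push the problem into the fibre $N$ (a RAAG), and run the twisted conjugacy algorithm for the induced sign automorphism.

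The main obstacle I anticipate is the \emph{bookkeeping of the fibre}: the normal subgroup $N$ generated by $y$ under conjugation by arbitrary words in the $x_i$ is a priori infinitely generated unless one is careful — but because each $x_i$ with $i \in I$ acts by an involution ($x_i y x_i^{-1} = y^{\pm 1}$) and each $x_i$ with $i \notin I$ commutes with $y$, the orbit of $y$ is just $\{y, y^{-1}\}$, so $N = \langle y\rangle \cong \bZ$ and $T(\ga) = \bZ \rtimes A(\Delta)$, with $A(\Delta)$ acting through the homomorphism $A(\Delta) \to \bZ/2\bZ$ sending $x_i \mapsto 1$ iff $i \in I$. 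This collapses the problem to: decide conjugacy in $\bZ \rtimes_\psi A(\Delta)$ where $\psi$ factors through $\bZ/2\bZ$. I would therefore spend the bulk of the argument verifying (i) this semidirect-product decomposition from the presentation (a normal-form / Reidemeister–Schreier computation, citing \citep{foniqi2022, foniqi2024twisted} for normal forms), (ii) that conjugacy in such a semidirect product reduces to finitely many twisted-conjugacy queries in $A(\Delta)$ together with a $\bZ$-coordinate check, and (iii) that each query is an instance of the twisted conjugacy problem solved in linear time in \citep{Crowe2024}, with all reductions (word reading, exponent-sum extraction, piling construction) being linear in the input length; the conclusion then follows.
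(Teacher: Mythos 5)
Your overall strategy is the paper's strategy (split off a semidirect product, invoke the Bogopolski--Martino--Ventura reduction of conjugacy in $G \rtimes_{\varphi}\Z$ to $\varphi$-twisted conjugacy in $G$, then apply the linear-time twisted conjugacy algorithm of \citep{Crowe2024} for inversion automorphisms of a RAAG), but you have the fibration backwards, and the error starts with a wrong computation. From $x_i y x_i = y$ one gets $x_i y = y x_i^{-1}$, hence $y^{-1} x_i y = x_i^{-1}$: it is conjugation by $y$ that inverts $x_i$. Your rewriting $x_i y x_i^{-1} = y^{-1}$ is false; in fact $x_i y x_i^{-1} = y x_i^{-2}$, so the orbit of $y$ under conjugation is not $\{y, y^{-1}\}$ and $\langle y\rangle$ is not normal. (Already in the Klein bottle group $\langle a,b \mid aba=b\rangle$ the normal closure of $b$ is $\langle b, a^2\rangle$, of index $2$, not $\langle b\rangle \cong \Z$.) Consequently the decomposition you settle on, $T(\ga) \cong \Z \rtimes_{\psi} A(\Delta)$ with normal fibre $\langle y\rangle$, does not hold, and even if it did, the BMV-type fact you cite applies to extensions \emph{by} $\Z$ (i.e.\ $G \rtimes \Z$ with $\Z$ acting), not to $\Z \rtimes Q$ with a RAAG acting, so the reduction to a single twisted-conjugacy query would not follow in the form you state.

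The correct splitting, which is the one the paper uses, is $T(\ga) \cong A(\widehat{\ga}) \rtimes_{\varphi} \Z$ where $A(\widehat{\ga}) = \langle x_1,\dots,x_n\rangle$ is the normal RAAG, $\Z = \langle y\rangle$ is the acting cyclic group, and $\varphi$ is the composition of inversions $x_i \mapsto x_i^{-1}$ for $i \in I$ (fixing $x_j$ for $j \notin I$); note this requires $y$ to be adjacent in $\ga$ to every $x_j$, an assumption you, like the paper's introduction, leave implicit. Once the roles are swapped, the rest of your outline goes through essentially verbatim and coincides with the paper's proof: conjugacy in $T(\ga)$ is equivalent (after matching the $y$-exponent sums) to the twisted conjugacy problem $\mathrm{TCP}_{\varphi^a}(A(\widehat{\ga}))$ for compositions of inversions, which \citep{Crowe2024} solves in linear time via pilings.
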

T-RAAGs of this form are cyclic extensions of RAAGs, and so the conjugacy problem for~$T(\ga)$ is equivalent to solving the twisted conjugacy problem in the RAAG~$A(\widehat{\ga})$ -- see \citep*{Orbits}. 

We finish the article by showing T-RAAGs are \emph{biautomatic}, a property which also holds in RAAGs. There is extensive literature on automatic and biautomatic groups, and we refer the reader to \citep{epstein1992word} for background information. Biautomatic groups have solvable conjugacy problem \citep{Gersten1997}, and so the following result provides an alternative proof of \autoref{thm:main result intro}.

\begin{maintheorem}[\cref{thm:biauto}]\label{thm:biauto intro}
    {\it T-RAAGs are biautomatic.} 
\end{maintheorem}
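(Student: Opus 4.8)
The plan is to realise every T-RAAG as a group acting properly discontinuously and cocompactly by isometries on a CAT(0) cube complex, and then invoke the theorem of Niblo and Reeves that such groups are biautomatic. Fix a mixed graph $\Gamma$, and let $\Gamma_{0}$ be the simplicial graph underlying it, obtained by forgetting all orientations, so that every Klein edge of $\Gamma$ becomes a commuting edge of $\Gamma_{0}$. Let $A(\Gamma_{0})$ be the associated RAAG and let $X$ be the universal cover of its Salvetti complex: a CAT(0) cube complex on which $A(\Gamma_{0})$ acts freely and cocompactly by cubical isometries. For each vertex $v$ of $\Gamma$, the automorphism of $A(\Gamma_{0})$ inverting $v$ and fixing every other standard generator is realised by a cubical automorphism of the Salvetti complex, which lifts to an order-$2$ cubical isometry $r_{v}$ of $X$. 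These lifts commute with one another and generate a finite elementary abelian $2$-group $E\leq\operatorname{Aut}(X)$ which normalises the deck group $A(\Gamma_{0})$; hence $A(\Gamma_{0})\rtimes E$ acts on $X$, and as it contains $A(\Gamma_{0})$ with finite index $|E|$, this action is again proper and cocompact.

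The main step is to embed $T(\Gamma)$ into $A(\Gamma_{0})\rtimes E$ as a finite-index subgroup. I would define $\rho\colon T(\Gamma)\to A(\Gamma_{0})\rtimes E$ on generators by
\[
\rho(v)\;=\;t_{v}\prod_{u\to v} r_{u},
\]
where $t_{v}$ is the standard generator of $A(\Gamma_{0})$ corresponding to $v$ and the product runs over the edges of $\Gamma$ oriented into $v$. One then checks that $\rho$ respects the defining relations of $T(\Gamma)$: for a commuting edge $\{u,v\}$ the translations and reflections appearing in $\rho(u)$ and $\rho(v)$ commute in pairs, and for a Klein edge $u\to v$ the identity $\rho(u)\rho(v)\rho(u)=\rho(v)$ reduces to the dihedral relation $r_{u}t_{u}r_{u}=t_{u}^{-1}$ and the commutation $[t_{u},t_{v}]=1$ in $A(\Gamma_{0})$ --- in both cases using that $\Gamma$ has at most one edge between any two vertices, so that no reflection shares an axis with a translation it ought to commute with. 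Composing $\rho$ with the projection $A(\Gamma_{0})\rtimes E\to E$ gives the parity homomorphism $\varphi\colon T(\Gamma)\to E$, $v\mapsto\prod_{u\to v}r_{u}$, whose kernel has finite index in $T(\Gamma)$. Since $\rho(v)^{2}=t_{v}^{2}$ for every $v$, the restriction $\rho|_{\ker\varphi}$ takes values in $A(\Gamma_{0})$, and the crucial point is that $\rho|_{\ker\varphi}$ is injective with image of finite index in $A(\Gamma_{0})$. Granting this, $\ker\rho\subseteq\ker\varphi$ forces $\rho$ itself to be injective, and an index count shows $\rho(T(\Gamma))$ has finite index in $A(\Gamma_{0})\rtimes E$. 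Therefore $T(\Gamma)$ acts, via $\rho$, properly discontinuously and cocompactly by isometries on $X$, so by the Niblo--Reeves theorem $T(\Gamma)$ is biautomatic.

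The hard part is the crucial point above, namely that $\rho$ restricted to $\ker\varphi$ is injective with finite-index image; this is exactly where the normal form theorem for T-RAAGs of \citep{foniqi2022, foniqi2024twisted} must be used. The resulting statement --- that $T(\Gamma)$ is virtually a RAAG, realised explicitly inside $A(\Gamma_{0})\rtimes E$ --- is the kind of structural fact one expects to be available in \citep{foniqi2024twisted}, in which case the argument above is short. I should stress that one cannot take the shortcut ``$T(\Gamma)$ is virtually a RAAG, hence biautomatic'', because it is not known in general that a finite extension of a biautomatic group is biautomatic; passing through the cube complex $X$ is what makes the argument legitimate, since only the (unproblematic) inheritance of a proper cocompact action by a finite-index \emph{subgroup} is invoked. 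A more self-contained alternative, closer to the spirit of the rest of the paper, would be to build a biautomatic structure on $T(\Gamma)$ directly: take the regular language of normal-form words and check the two-sided fellow-traveller property by hand, as one can for RAAGs. The new obstacle there is that the commutation ``shuffles'' which are harmless in a RAAG now interact with signs --- moving a letter past a Klein-related letter inverts it --- so one must re-establish both the regularity of the normal-form language and the uniform closeness of the normal forms of $g$, $xg$ and $gx$. Either way, biautomaticity, together with \citep{Gersten1997}, reproves the decidability of the conjugacy problem from \autoref{thm:main result intro}.
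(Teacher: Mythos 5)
Your route is genuinely different from the paper's. The paper never cubulates: it builds a finite state automaton $\mathcal{M}_{\Gamma}$ accepting the shortlex normal forms of $T(\Gamma)$ over $V\Gamma^{\pm}$ (which, as a language, coincides with the normal form language of the underlying RAAG, since reducibility and the shortlex rewriting only depend on the undirected graph), verifies the two-sided fellow-traveller property for $A(\overline{\Gamma})$ with respect to this language, and then transfers it to $T(\Gamma)$ using the fact that $\mathsf{Cay}(T(\Gamma),S)$ and $\mathsf{Cay}(A(\overline{\Gamma}),S)$ are isomorphic as undirected graphs (\citealp[Theorem 5.9]{foniqi2022}). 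This is exactly the ``more self-contained alternative'' you sketch in your last paragraph, and the undirected Cayley graph isomorphism is precisely the tool that disposes of the sign-interaction worry you raise there. Your cube-complex argument, if completed, would buy more (e.g.\ all the consequences of cocompact cubulation), whereas the paper's argument is elementary and stays within the combinatorics already developed.

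The gap is the one you flag yourself, and it is a real gap rather than a routine verification to be outsourced: everything rests on $\rho$ being injective and on $\rho(T(\Gamma))$ acting properly and cocompactly on $X$, and neither is proved. Both can in fact be extracted from the normal form theorem. For injectivity: if $w=g_1\cdots g_n$ is a nonempty reduced word in $T(\Gamma)$ with $g_i=v_i^{a_i}$, then the $A(\Gamma_0)$-component of $\rho(w)$ is $t_{v_1}^{\pm a_1}\cdots t_{v_n}^{\pm a_n}$, where only signs of exponents are altered; since ``reduced'' is a condition on supports and stars that is identical in $T(\Gamma)$ and $A(\Gamma_0)$, this word is reduced in $A(\Gamma_0)$ and hence nontrivial, so $\rho(w)\neq 1$. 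For cocompactness, your proposed ``index count'' is not as innocent as stated (for instance $\langle t_v^2 : v\in V\rangle$ has infinite index in $A(\Gamma_0)$ in general, so one cannot just quote the squares); the clean statement is that the orbit map $g\mapsto \mathrm{proj}_{A(\Gamma_0)}(\rho(g))\cdot x_0$ is a bijection from $T(\Gamma)$ onto the $A(\Gamma_0)$-orbit of the base vertex, because the sign twist applied at position $i$ depends only on $v_1,\dots,v_{i-1}$ and not on the exponents, so every reduced word of $A(\Gamma_0)$ is hit; this bijection is essentially the content of \citealp[Theorem 5.9]{foniqi2022}. With those two points supplied, finite stabilisers plus a cocompact orbit give a proper cocompact action and Niblo--Reeves applies. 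As written, though, the decisive step of your proof is a hope that the needed structural fact ``is the kind of statement one expects to be available,'' which is not a proof.
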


The structure of this paper is as follows. \Cref{Definitions_preliminaries_and_notation} serves to establish the notation and provide the main definitions of the paper; this includes functions that help us keep track of syllable shuffling in words representing group elements of T-RAAGs. This will allow us to establish a conjugacy criterion, similar to that of graph products, which we can then use to prove our main results in \cref{sec:CP in T-RAAGS}. We finish by proving biautomaticity of T-RAAGs in \cref{sec:biauto}.

\section{Preliminaries and notation}\label{Definitions_preliminaries_and_notation}

For a finite set $A$, denote by $A^\ast$ the corresponding free monoid, that is, the set of all words over $A$, including the empty word $\varepsilon$. For a word $w \in A^\ast$, denote by $l(w)$ its length, which is the number of letters used to  write it.

For a subset~$X$ of a group, we let~$X^{\pm} = X \sqcup X^{-1}$, where~$X^{-1} = \{x^{-1} \mid x \in X \}$.
Our groups will be given by finite presentations~$G = \gp{S}{R}$; any element~$g \in G$ can be written as a word over~$S^{\pm}$. 
We let~$\sgp{X}$ denote the subgroup of~$G$ generated by~$X$. 

For a group~$G = \langle X \rangle$, and words~$u,v \in (X^{\pm})^*$, we let~$u = v$ denote equality of words, $u =_{G} v$ denote the equality of group elements represented by~$u$ and~$v$, and $u \sim v$ denote when $u$ and $v$ represent conjugate elements of $G$. For a word~$w \in X^{\ast}$, we denote by~$l(w)$ the word length of~$w$ over~$X$. For a group element~$g$ in~$G = \langle X \rangle$, we define the \emph{length} of~$g$, denoted~$| g |_{X}$, to be the length of a shortest representative word for the element~$g$ over~$X^{\pm}$, i.e.~$|g|_{X} = \min \{ l(w) \mid w\in (X^{\pm})^*, \; w =_G g\}$; if~$X$ is fixed or clear from the context, we write~$|g|$. 

\comm{A word~$w \in (X^{\pm})^*$ is called a \emph{geodesic} if~$l(w) = |\pi(w)|$, where~$\pi \colon (X^{\pm})^* \rightarrow G$ is the natural projection.}
\comm{
\subsection{Right-angled Artin groups}\label{Right-angled Artin groups}
The graphs we use to define graph groups are called \emph{simplicial}, they come with no loops, and no multiple edges.
\begin{definition}\label{def_simplicial_graph}
A {\it simplicial graph~$\Gamma$} is a pair~$\Gamma = (V, E)$, where~$V = V\ga$ is a non-empty set whose elements are called \emph{vertices}, and~$E = E\ga \subseteq \{\{x,y\} \mid x,y \in V, x\neq y\}$ is a set of paired distinct vertices, whose elements are called \emph{edges}.
\end{definition}
For a finite simplicial graph~$\Gamma = (V, E)$ we use~$A(\Gamma)$ to denote the right-angled Artin group based on~$\Gamma$, defined by the presentation 
$$A(\Gamma) \coloneqq \langle\, V \mid uv = vu \text{ whenever } \{u,v\}\in E\,\rangle.$$

\begin{definition}\label{def: link and star}
    For~$v \in V\ga$, we define 
    \[ \lk(v) = \{u \in V\ga \mid \{u,v\} \in E\ga \}
    \]
    For a subset~$A \subseteq V\ga$, we define~$\lk(A) = \cap_{v \in A} \lk(v)$. Similarly we define 
    \[ \st(v) = \lk(v) \cup \{v\},
    \]
    and~$\st(A) = \cap_{v \in A} \st(v)$.
\end{definition}

\begin{remark}
    Ferov paper: refers to length of word as the syllable length, don't confuse with word length!

    \textbf{Reduced words}: no two syllables can be joined.
\end{remark}}

\begin{definition}\label{def_simplicial_graph}
A {\it simplicial graph~$\Gamma$} is a pair~$\Gamma = (V, E)$, where~$V = V\ga$ is a non-empty set whose elements are called \emph{vertices}, and~$E = E\ga \subseteq \{\{x,y\} \mid x,y \in V\ga, x\neq y\}$ is a set of pairwise distinct vertices, whose elements are called \emph{edges}.
\end{definition}
\begin{definition}
    For a finite simplicial graph~$\Gamma = (V, E)$ we let~$A(\Gamma)$ denote the \emph{right-angled Artin group} (RAAG) based on~$\Gamma$, defined by the presentation 
$$A(\Gamma) := \langle\, V \mid uv = vu \text{ whenever } \{u,v\}\in E\,\rangle.$$
\end{definition}

\subsection{Graph products}
Here we give an overview of \emph{graph products}, and their normal form from \citep{green1990graph}.
\begin{definition}\label{def_graph_products}
Let~$\Gamma = (V,E)$ be a simplicial graph with~$V=\{1, 2, \ldots, n\}$, and let~$G_i$ be groups, indexed by~$i\in V$. The {\it graph product} of groups~$G_1, G_2, \ldots, G_n$, based on~$\Gamma$, is given by the presentation
$$G(\Gamma) \coloneqq \langle G_1, G_2, \ldots, G_n \bigm| [G_i, G_j] = 1, \forall \{i, j\} \in E \rangle.$$
One refers to the groups~$G_i$ (for~$1\leq i \leq n$) as {\it generating groups} of~$G(\Gamma)$.
\end{definition}

Taking~$G_i = \mathbb{Z}$, for all~$i \in V$, the graph product~$G(\Gamma)$ becomes the RAAG~$A(\Gamma)$; thus, RAAGs are special cases of graph products.

\begin{definition}\label{syllable_length}
Let~$G(\Gamma)$ be the graph product of the groups~$G_1, G_2, \dots, G_n$, and let~$w$ be a word in the generators of the~$G_i$.
If~$w = w_1 w_2 \dots w_r$, where each~$w_i$ is a word in the generators of only one of the generating groups, no~$w_i$ is the empty word, and~$w_i$ and~$w_{i+1}$ are not in the same generating group for all~$1 \leq i \leq r-1$, then:
\begin{itemize}[itemsep=4pt,parsep=0pt,topsep=0pt, partopsep=0pt]
    \item[(i)] the words~$w_i$ (for~$1 \leq i \leq r$) are called the \emph{syllables} of~$w$,
    \item[(ii)]~$w_1, w_2, \dots, w_r$ is called a \emph{sequence of syllables} representing the element~$w$,
    \item[(iii)] the \emph{syllable length}~$\lambda(w)$ of~$w$ is equal to~$r$, and
    \item[(iv)] the \emph{syllable length}~$\lambda(g)$ of an element~$g \in G(\Gamma)$ is the minimal syllable length of words representing~$g$, i.e.,~$\lambda(g) = \min\{\lambda(w) \mid w =_{G(\Gamma)} g\}$.
\end{itemize}
\end{definition}

The syllable length has the following basic properties:~$\lambda(1) = 0$ for the identity element~$1 \in G(\Gamma)$,~$\lambda(g) = 1$ when~$g \neq 1$ belongs to one of the generating groups, and~$\lambda(g) \geq 2$ when~$g$ does not belong to one of the generating groups.

\begin{definition}
Let~$w_1, w_2, \ldots, w_r$ be a sequence of syllables representing an element~$g$ of~$G(\Gamma)$. For~$1 \leq i < j \leq r$ we will say that the syllables~$w_i$ and~$w_j$ can {\it be joined together} if~$w_i$ and~$w_j$ belong to the same generating group, and for all~$k = i+1, \ldots, j-1$ one has~$w_iw_k =_{G(\Gamma)} w_kw_i$.
\end{definition}

In this case one can group together~$w_i$ and~$w_j$, and present~$g$ with fewer syllables.

\begin{definition}\label{def_reduced_sequence_graph_products}
A sequence~$w_1,\dots,w_r$ of syllables representing an element of~$G(\Gamma)$ is \emph{reduced} if it is the empty sequence~$\varnothing$, or if~$w_i\neq 1$ for all~$1 \leq i \leq r$ and no two syllables of the sequence can be joined.
\end{definition}

\begin{definition}\label{relation_graph_products}
Let~$\cong$ be the equivalence on reduced sequences of syllables generated by
\[
w_1,\dots,w_i,w_{i+1},\dots,w_r \ \cong\ w_1,\dots,w_{i+1},w_i,\dots,w_r
\]
whenever~$[w_i,w_{i+1}]=1$, i.e., their vertex groups are adjacent in~$\Gamma$. We refer to this relation as \emph{shuffling of syllables} or \emph{syllable shuffling}.
\end{definition}

Now we can state the normal form theorem for graph products.

\begin{theorem}[{\citealp[Theorem 3.9]{green1990graph}}]\label{normal_theorem_graph_products}
Let~$G(\Gamma)$ be a graph product of groups~$G_1, \ldots, G_n$. Each element~$g\in G(\Gamma)$ can be uniquely expressed, up to syllable shuffling, as a product:
$$g = g_1g_2\cdots g_r$$
where~$g_1, g_2, \ldots, g_r$ is a reduced sequence of syllables.
\end{theorem}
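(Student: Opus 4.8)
The plan is to prove existence of a reduced expression by a termination argument on syllable length, and uniqueness up to syllable shuffling by the van der Waerden trick: constructing an action of $G(\ga)$ on the set of syllable-shuffle classes of reduced sequences whose orbit of the empty sequence reconstructs the normal form.

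\textbf{Existence.} Starting from any word over the generators of the $G_i$, I would first group maximal monochromatic runs of letters into syllables, obtaining a sequence of syllables representing the given $g$. If this sequence is not reduced, then either some syllable is trivial, in which case I delete it, or two syllables $w_a, w_b$ (chosen with $b-a$ minimal among joinable pairs) can be joined; then every syllable strictly between them commutes with $w_a$, so I swap $w_a$ rightwards past those syllables until it abuts $w_b$ and multiply the two inside their common vertex group. Either operation strictly decreases the syllable length, so after finitely many steps the sequence is reduced and still represents $g$.

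\textbf{Uniqueness.} Let $\mathcal N$ be the set of $\cong$-classes of reduced sequences of syllables, including the empty sequence $\varnothing$. For $i\in V\ga$ and $h\in G_i$ I would define $\rho_i(h)\colon\mathcal N\to\mathcal N$ as follows on a reduced sequence $S=(w_1,\dots,w_r)$: if no syllable of $S$ lying in $G_i$ can be shuffled to the front of $S$, set $\rho_i(h)[S]=[(h,w_1,\dots,w_r)]$ when $h\neq1$ and $\rho_i(h)[S]=[S]$ when $h=1$; otherwise there is exactly one syllable $w_k\in G_i$ that can be brought to the front (a second such syllable would be joinable to it, contradicting reducedness), and I replace $S$ by the sequence obtained by shuffling $w_k$ to the front and then multiplying its entry on the left by $h$, deleting that syllable if the product is $1$. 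I would then verify the routine facts: $\rho_i(h)$ is well defined on $\cong$-classes and outputs reduced sequences; $\rho_i\colon G_i\to\operatorname{Sym}(\mathcal N)$ is a homomorphism; and for every edge $\{i,j\}\in E\ga$ the subgroups $\rho_i(G_i)$ and $\rho_j(G_j)$ commute elementwise. Since $G(\ga)$ is presented by the $G_i$ subject precisely to $[G_i,G_j]=1$ for $\{i,j\}\in E\ga$, these data assemble into an action $\rho$ of $G(\ga)$ on $\mathcal N$. Finally, if $(g_1,\dots,g_r)$ is a reduced sequence then a short downward induction on $k$ shows $\rho(g_k)\cdots\rho(g_r)[\varnothing]=[(g_k,\dots,g_r)]$: reducedness of $(g_1,\dots,g_r)$ forces that none of $w_{k+1},\dots,w_r$ lying in $G_{i_k}$ can be shuffled to the front of $(g_{k+1},\dots,g_r)$, so $\rho(g_k)$ merely prepends $g_k$. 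Hence if $g=g_1\cdots g_r=g_1'\cdots g_s'$ with both sequences reduced, then $[(g_1,\dots,g_r)]=\rho(g)[\varnothing]=[(g_1',\dots,g_s')]$, so the two reduced sequences differ only by syllable shuffling.

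\textbf{Main obstacle.} The only genuinely delicate step is the commutation check for adjacent vertex groups in the definition of $\rho$: one must consider where the ``active'' $G_i$- and $G_j$-syllables of a reduced sequence sit relative to the front and to each other, and in the case where both already lie at the front use that a $G_i$-syllable and a $G_j$-syllable there may be shuffled past one another. An alternative that hides this bookkeeping is induction on $|V\ga|$ via the amalgam decomposition $G(\ga)=G(\ga\setminus\{v\})\ast_{G(\lk(v))}\bigl(G(\lk(v))\times G_v\bigr)$ together with the normal form theorem for amalgamated free products; the cost is then translating amalgam normal forms back into syllable sequences.
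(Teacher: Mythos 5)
The paper offers no proof of this statement; it is imported verbatim from Green's thesis, so there is nothing internal to compare your argument against, and your proposal should be judged on its own. It is a correct and essentially standard route: existence by a terminating reduction on syllable length, uniqueness by the van der Waerden trick of acting on $\cong$-classes of reduced sequences and recovering the class of a reduced expression from the orbit of $\varnothing$. Your reconstruction step is right, and your observation that at most one $G_i$-syllable of a reduced sequence can be shuffled to the front is the key structural fact. Two of the deferred ``routine facts'' deserve flagging, since that is where the real work (and any silent failure) would live. In the existence step, after you slide $w_a$ next to $w_b$ and multiply, the neighbours of the vacated position may land in the same vertex group, or the product $w_aw_b$ may be trivial, so a further regrouping is needed; this is harmless because syllable length still strictly decreases. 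In the uniqueness step, well-definedness of $\rho_i(h)$ needs a small cancellation lemma: if $S\cong(w_k,A)\cong(w_k,B)$ with $w_k$ the unique front-shufflable $G_i$-syllable, you must check $A\cong B$ before you may modify the head and keep the tail of a chosen representative; together with the elementwise commutation of $\rho_i(G_i)$ and $\rho_j(G_j)$ for $\{i,j\}\in E\Gamma$, which you rightly identify as the delicate point, these all go through by finite case analysis on where the relevant syllables sit. Your fallback via the amalgam decomposition $G(\Gamma)=G(\Gamma\setminus\{v\})\ast_{G(\lk(v))}\bigl(G(\lk(v))\times G_v\bigr)$ and induction on $|V\Gamma|$ is also viable and is how the statement is sometimes deduced in the literature, at the cost you mention of translating amalgam normal forms back into syllable sequences.
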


\subsection{Twisted right-angled Artin groups}\label{Twisted right-angled Artin groups}

To define T-RAAGs, we use mixed graphs, which are similar to simplicial graphs, but allow directed edges.



\begin{definition}\label{def_mixed_graph}
A \emph{mixed graph}~$\Gamma = (V, E, D, o, t)$ consists of an \emph{underlying simplicial graph}~$\overline{\Gamma} = (V, E)$, a set of \emph{directed edges}~$D \subseteq E$, and maps~$o, t \colon D \to V$ giving the \emph{origin} and \emph{terminus} of edges.  
For each~$e = \{x, y\} \in D$, we have~$o(e), t(e) \in \{x, y\}$ and~$o(e) \neq t(e)$.
\end{definition}

\begin{notation} 
Let~$\Gamma = (V, E, D, o, t)$ be a mixed graph.
\begin{itemize}[itemsep=4pt,parsep=0pt,topsep=0pt, partopsep=0pt]
    \item[(i)] If~$e = \{a,b\} \in E \setminus D$, we write~$e = [a, b]$; note also that~$e = [b, a]$.
    \item[(ii)] If~$e = \{a,b\} \in D$, we write~$e = [o(e), t(e)\rangle$.
\end{itemize}
Graphically, we present the respective edges~$[a,b]$, and~$[a,b\rangle$ as:
\begin{figure}[H]
\centering
\begin{tikzpicture}[>={Straight Barb[length=7pt,width=6pt]},thick]

\draw[fill=black] (0,0) circle (1.5pt) node[left] {$a$};
\draw[fill=black] (2,0) circle (1.5pt) node[right] {$b$};
\draw[fill=black] (5,0) circle (1.5pt) node[left] {$a$};
\draw[fill=black] (7,0) circle (1.5pt) node[right] {$b$};
\draw[thick] (0,0) -- (2,0);
\draw[thick, ->] (5,0) --  (7,0);
\end{tikzpicture}
\end{figure}
\end{notation}
In group presentations arising from graphs, the edge~$[a,b]$ denotes the commutation of~$a$ and~$b$, while the edge~$[a,b\rangle$ denotes the corresponding Klein relation~$aba=b$. By a slight abuse of notation, we also write~$[a,b] = aba^{-1}b^{-1}$ and~$[a,b\rangle = abab^{-1}$.

\begin{definition}\label{def_of_T-RAAGs}
Let~$\Gamma = (V, E)$ be a mixed graph.
Define
\[
T(\Gamma) = \Big\langle V \;\big|\; ab = ba \ \text{if } [a,b] \in E,
\quad aba = b \ \text{if } [a,b\rangle \in E
\Big\rangle.
\]
We call~$T(\Gamma)$ the \emph{twisted right-angled Artin group} based on~$\Gamma$, and~$\Gamma$ its \emph{defining graph}.  
\end{definition}

\begin{remark}\label{rem: underlying RAAG of a T-RAAG}
For a given mixed graph~$\Gamma = (V, E, D, o, t)$, we denote by~$\overline{\Gamma}$ the underlying simplicial graph. We let $A(\overline{\Gamma})$ denote the \emph{underlying RAAG} of~$T(\Gamma)$.
\end{remark}

\begin{notation}
    For a T-RAAG~$\, T(\Gamma)$ with defining graph~$\ga = (V,E)$, we let~$S = V^{\pm}$. 
\end{notation}

\begin{definition}\label{def: link and star}
    Let~$\ga$ be a mixed graph. For~$v \in V\Gamma$, we define 
    \[ \lk(v) = \{u \in V\Gamma \mid \{u,v\} \in E\Gamma \}.
    \]
    For a subset~$A \subseteq V\Gamma$, we define~$\lk(A) = \cap_{v \in A} \lk(v)$. Similarly we define 
    \[ \st(v) = \lk(v) \cup \{v\},
    \]
    and~$\st(A) = \cap_{v \in A} \st(v)$.
\end{definition}

\begin{definition}
Let~$T(\Gamma)$ be a T-RAAG with defining graph~$\Gamma$. Any element~$g \in T(\Gamma)$ can be written as a product 
\[
w = g_1 \dots g_n \in S^*
\] 
where each~$g_i = v_i^{a_i}$ for some~$v_i \in V\Gamma$ and~$a_i \in \mathbb{Z}$. Each~$g_i$ is called a \emph{syllable} of~$w$.  

For~$1 \le i < j \le n$, syllables~$g_i$ and~$g_j$ can be \emph{joined together} if~$v_i = v_j$ and for every~$k \in \{i+1, \dots, j-1\}$,~$v_k \in \mathrm{St}(v_i)$.  

A word~$w = g_1 \dots g_n$ is \emph{reduced} if it is empty or if $g_i \neq 1$ (for all $1 \leq i \leq n$) and no two syllables in~$w$ can be joined.
\end{definition}

\subsection{Normal forms and syllable shuffling in T-RAAGs}\label{sec:shuffles T-RAAGS}

\comm{
In graph products, syllable shuffling is well-defined whenever an edge exists in the defining graph between vertex groups. For T-RAAGs, we can still apply syllable shuffling, but more care is required to take into account possible reversal of powers of generators. }
In this subsection we define notation which will allow us to work with reduced words representing elements of T-RAAGs in a similar way to that of graph products, whilst also keeping track of possible changes to powers when we apply syllable shuffling. For example, in the Klein bottle group~$K = \langle a,b \mid aba=b \rangle$, the generators~$a$ and~$b$ can shuffle, but at the cost of reversing the power of~$a$, since~$ab = ba^{-1}$. We first introduce notation used similarly when working with graph products.

\begin{definition}
    Let~$g \in T(\Gamma)$, let~$w = g_{1}\dots g_{n} \in S^{\ast}$ be a reduced word representing~$g$. We define the \emph{support} of~$g$ in~$T(\Gamma)$ as
    \[ \mathrm{supp}(g) = \{v \in V\ga \mid \exists \; i \in \{1, \dots n\} \; \text{such that} \; g_{i} = v^{a} \; (a \in \Z) \}.
    \]
   \comm{ Note that this is well-defined as a result of \cref{thm:NFT}. }
\end{definition}
The following definitions match those of \citep{Ferov2016}. 
\begin{definition}
Let~$g \in T(\ga)$. Define~$\mathrm{FL}(g) \subseteq V\ga$ as the set of all~$v \in V\ga$ such that there exists a reduced word representing~$g$ which starts with the syllable~$v^{a}$, for some~$a \in \Z \setminus \{0\}$. Similarly define~$\mathrm{LL}(g) \subseteq V\ga$ as the set of all~$v \in V\ga$ such that there exists a reduced word representing~$g$ which ends with the syllable~$v^{a}$. Note~$\mathrm{FL}(g) = \mathrm{LL}(g^{-1})$. 
\end{definition}

\comm{
\begin{remark}
    FL and LL: similar to top and bottom tiles of pilings. 
\end{remark}}

\begin{remark}
    Let~$x = x_{1}\dots x_{n}$,~$y = y_{1}\dots y_{m}$ be reduced words over~$S$. Then the product~$xy$ is reduced if and only if~$\mathrm{LL}(x) \cap \mathrm{FL}(y) = \varnothing$. 
\end{remark}
Our aim is to define a function which keeps track of changing powers of generators in a word, when syllables shuffle past each other. We start by considering the generators themselves, before extending our map to words over the generating set. 
\comm{
\begin{definition}
    Let~$T = T(\Gamma) = \langle V\ga \rangle$, let~$\epsilon = \pm 1$. For each~$s \in V\ga$, define
    \begin{align*}
        \phi_{s} \colon \st(s) &\rightarrow \st(s) \\
        x^{\epsilon} &\mapsto 
        \begin{cases}
            x^{\epsilon} & sx =_{T} xs \quad \text{or} \quad sxs =_{T} x \\
            x^{-\epsilon} &  xsx =_{T} s
        \end{cases}
    \end{align*}
    For each~$s^{-1} \in V\ga^{-1}$, we define~$\phi_{s^{-1}} = \phi_{s}$.
\end{definition}}

\begin{definition}\label{Gemma notation}
Let~$\Gamma$ be a mixed graph. For each~$v \in V\Gamma$, define a map $\varphi_{v} \colon \Star(v)^{\pm 1} \rightarrow \Star(v)^{\pm 1}$, such that for $x \in \Star(v)$ and $\epsilon \in \{-1, 1\}$, we map
\begin{align*}
	x^{\epsilon} & \mapsto 
        \begin{cases}
            x^{\epsilon} & \text{if } x = v, \text{ or } [v, x] \in E\Gamma, \text{ or } [v, x \rangle \in E\Gamma \\
            x^{-\epsilon} & \text{if }  [x, v \rangle \in E\Gamma
        \end{cases}
\end{align*}
Moreover, for any~$v \in V\Gamma$, define~$\varphi_{v^{-1}} \coloneqq \varphi_{v}$.
\end{definition}
\comm{
\begin{example}
    If~$e = [a,b] \in \overline{E\Gamma}$ is an undirected edge, then~$ba =_{T} \phi_{b}(a)\phi_{a}(b) = ab$. Similarly if~$e = [a, b \rangle \in \overrightarrow{E\Gamma}$ is a directed edge, then~$ba =_{T} \phi_{b}(a)\phi_{a}(b) = a^{-1}b$. 
\end{example}
}

\begin{lemma}
If~$a, b$ are adjacent vertices in~$\Gamma$, then~$ba =_{T(\Gamma)} \varphi_{b}(a)\varphi_{a}(b)$.
\end{lemma}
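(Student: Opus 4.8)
The plan is to verify the identity by a direct case analysis on the type of edge joining $a$ and $b$ in the mixed graph $\Gamma$; since $a,b$ are adjacent, exactly one of three cases holds: $[a,b]\in E\Gamma$ (undirected), $[a,b\rangle\in E\Gamma$, or $[b,a\rangle\in E\Gamma$. In each case I would unpack the definition of $\varphi_b(a)$ and $\varphi_a(b)$ from \autoref{Gemma notation}, compute the right-hand side $\varphi_b(a)\varphi_a(b)$, and then use the defining relation of $T(\Gamma)$ associated to that edge to check it equals $ba$ in $T(\Gamma)$. One should also note at the outset that the expressions $\varphi_b(a)$ and $\varphi_a(b)$ are defined, because $a \in \Star(b)$ and $b \in \Star(a)$ whenever $a,b$ are adjacent.

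In more detail: in the undirected case $[a,b]\in E\Gamma$, both maps act as the identity on the relevant generators, so $\varphi_b(a)\varphi_a(b) = ab$, and the commutation relation $ab = ba$ gives the claim. In the case $[a,b\rangle\in E\Gamma$ — so the Klein relation $aba = b$ holds — we have from the definition that $\varphi_b(a) = a^{-1}$ (since $[a,b\rangle$ means $a$ plays the role of ``$x$'' with $[x,b\rangle\in E\Gamma$, so $a^{\epsilon}\mapsto a^{-\epsilon}$) while $\varphi_a(b) = b$ (since $[a,b\rangle\in E\Gamma$ with $a$ playing the role of ``$v$'', so $b$ is fixed). Thus the right-hand side is $a^{-1}b$, and I must check $ba =_{T(\Gamma)} a^{-1}b$; but $aba = b$ rearranges to $ba = a^{-1}b$, which is exactly what is needed. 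The symmetric case $[b,a\rangle\in E\Gamma$ — so $bab = a$, i.e.\ $ba = ab^{-1}$ — gives $\varphi_b(a) = a$ and $\varphi_a(b) = b^{-1}$, hence right-hand side $ab^{-1}$, and $bab = a$ rearranges to $ba = ab^{-1}$, again as required.

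There is no real obstacle here: the lemma is essentially a bookkeeping statement that the maps $\varphi_v$ were \emph{defined} so as to record exactly the power-reversal that occurs when a syllable is shuffled past $v$. The only mild subtlety worth stating carefully is the bookkeeping of which vertex plays the role of ``$v$'' and which plays the role of ``$x$'' in \autoref{Gemma notation} when reading off $\varphi_b(a)$ versus $\varphi_a(b)$ in the directed cases — getting the orientation of the edge matched to the correct branch of the case split. Once that is pinned down, each of the three cases is a one-line rearrangement of the corresponding defining relation.
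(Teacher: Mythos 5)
Your proposal is correct and matches the paper's own proof: both proceed by the same three-case analysis on the edge type ($[a,b]$, $[a,b\rangle$, $[b,a\rangle$), unpack $\varphi_b(a)$ and $\varphi_a(b)$ from the definition, and verify the identity against the corresponding defining relation. The computations in each case (yielding $ab$, $a^{-1}b$, and $ab^{-1}$ respectively) agree exactly with the paper.
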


\begin{proof}
If~$e = [a,b] \in E\Gamma$, then~$ba =_{T(\Gamma)} ab = \varphi_{b}(a)\varphi_{a}(b)$. Similarly, if~$e = [a, b \rangle \in E\Gamma$, then~$ba =_{T(\ga)}  a^{-1}b = \varphi_{b}(a)\varphi_{a}(b)$. Lastly, if~$e = [b,a \rangle \in E\Gamma$, then~$ba =_{T(\ga)}  ab^{-1} = \varphi_{b}(a)\varphi_{a}(b)$.
\end{proof}

\comm{
\begin{remark}
     Power of generator is irrelevant, we just need to know what type of generator we have.
\end{remark}}
The function~$\varphi_{s}$ can be extended as follows. If~$x \in \st(s) \cap \st(t)$, for some~$s,t \in V\ga$, then $\varphi_{st}(x) = \varphi_{s} \circ \varphi_{t}(x)$. This immediately implies that~$\varphi_{s^{2}}(x) = x$, which coincides with the fact that if we shuffle a generator~$s$ past~$x^{\pm 1}$ and back again, then the power of~$x$ is unchanged.

We now extend~$\varphi_{s}$ with respect to words in T-RAAGs. For notation, if~$s \in V\ga$, we let~$S_{s} = (\st(s))^{\pm}$. 

\begin{definition}\label{def:Gemma_star_notation}
    Let~$s \in V\ga$, let~$w = w_{1}\dots w_{n} \in S_{s}^{\ast}$ be a reduced word. Define
    \begin{align*}
        \varphi_{s} \colon S_{s}^{\ast} &\rightarrow S_{s}^{\ast} \\
        w &\mapsto \varphi_{s}(w_{1})\dots \varphi_{s}(w_{n}).
    \end{align*}

\end{definition}

Note that composition works in a similar way as before: suppose $s,t \in V\ga$, and let $S_{\{s, t\}} = \st(s) \cap \st(t)$. Then for any reduced word $w \in S^{\ast}_{\{s,t\}}$, we have $\varphi_{st}(w) = \varphi_{s}\circ\varphi_{t}(w)$. For reduced words $u,v \in S^{\ast}$, we say~$\varphi_{u}(v)$ and~$\varphi_{v}(u)$ are \emph{defined} if for all pairs~$(s,t)$ such that~$s \in \mathrm{supp}(u)$ and~$t \in \mathrm{supp}(v)$, one has~$\{s,t\} \in E\ga$. If this holds, then the words~$u$ and~$v$ can shuffle past each other in $T(\ga)$.

Syllable length of words and elements in T-RAAGs are defined similarly as in \autoref{syllable_length}.
In analogy with \autoref{relation_graph_products}, and using \autoref{def:Gemma_star_notation}, one has a definition of shuffling of syllables in T-RAAGs. 
\begin{definition}\label{def_traag_relation_graph_products}
Let~$\cong$ be the equivalence on reduced sequences generated by
\[
w_1,\dots,w_{i-1}, w_i,w_{i+1}, w_{i+2}, \dots, w_r \ \cong\ w_1,\dots, w_{i-1}, \varphi_{a^{m}}(w_{i+1}), \varphi_{b^{n}}(w_i), w_{i+2}, \dots,w_r
\]
whenever $w_i = a^m$, $w_{i+1} = b^n$, with $\{a, b\} \in E\Gamma$, for some $m,n \in \Z_{\neq 0}$. We refer to this relation as \emph{shuffling of syllables}. 
\end{definition}
In particular, shuffling of syllables gives~$a^m b^n  \longleftrightarrow b^n a^m$ and~$a^m b^n \longleftrightarrow b^n a^{(-1)^n m}$ for any~$m,n \in \Z$, arising from~$ab = ba$, and~$aba = b$ respectively.

\begin{theorem}\label{thm:NFT}(Normal form theorem, \citealp{foniqi2022}).
    Every element~$g \in T(\Gamma)$ can be represented by a reduced word~$g = g_{1}\dots g_{n}$, where each~$g_{i} = v_{i}^{a_{i}}$ for some~$v_{i} \in V\ga$,~$a_{i} \in \Z$. \\
    Moreover, if we have two reduced words~$x, y \in S^{\ast}$ representing~$g$, then we can obtain one word from the other via a finite sequence of syllable shuffling. In particular,~$\lambda(x) = \lambda(y)$.  
\end{theorem}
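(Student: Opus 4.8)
The plan is to derive the result from the corresponding statement for graph products, \autoref{normal_theorem_graph_products}, by identifying $T(\Gamma)$ with (a twisted version of) a graph product of copies of $\Z$ and tracking the bookkeeping through the maps $\varphi_s$. First I would explain the existence of a reduced word: starting from any word over $S = V\Gamma^{\pm}$, one repeatedly (i) deletes trivial syllables $v^0$, and (ii) whenever two syllables $g_i = v^{a}$ and $g_j = v^{a'}$ with the same underlying vertex satisfy $v_k \in \st(v)$ for all $i < k < j$, one shuffles the intermediate syllables past $g_j$ (using the previous lemma, $ba =_{T(\Gamma)} \varphi_b(a)\varphi_a(b)$, repeatedly) and merges $g_i$ with $g_j$. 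Each such step strictly decreases the number of syllables, so the process terminates in a reduced word; this proves existence. Note that the merge is legitimate precisely because $\varphi_s$ only changes \emph{exponents}, never underlying vertices, so ``$v_k \in \st(v)$ for all intermediate $k$'' is preserved under the shuffles that bring $g_i$ adjacent to $g_j$.

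For uniqueness up to syllable shuffling, the idea is to pass to the underlying RAAG. Consider the natural surjection $\rho \colon T(\Gamma) \to A(\overline{\Gamma}) = G(\overline{\Gamma})$ (the graph product of $\Z$'s) induced by $v \mapsto v$; this is well-defined since $aba = b$ maps to $aba^{-1} = b$, which holds in $A(\overline{\Gamma})$ because $a,b$ commute there. A reduced word in $T(\Gamma)$ maps to a word in $A(\overline{\Gamma})$ with the same underlying sequence of vertices and absolute values of exponents; one checks that a $T(\Gamma)$-reduced word is sent to an $A(\overline{\Gamma})$-reduced word (no two syllables can be joined, by the same $\st$-criterion, which is identical in $\Gamma$ and $\overline{\Gamma}$). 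Suppose $x = g_1 \cdots g_n$ and $y = h_1 \cdots h_m$ are reduced words in $T(\Gamma)$ with $x =_{T(\Gamma)} y$. Applying $\rho$ and \autoref{normal_theorem_graph_products} in $A(\overline{\Gamma})$, we get $n = m$ and a sequence of shuffles in $A(\overline{\Gamma})$ carrying $\rho(x)$ to $\rho(y)$; in particular the multiset of underlying vertices agrees, and the two syllable sequences differ by a permutation realized by transpositions of adjacent-in-$\overline{\Gamma}$ (equivalently adjacent-in-$\Gamma$) syllables. I would then lift this permutation to $T(\Gamma)$: perform the \emph{same} sequence of transpositions on $x$, each time applying \autoref{def_traag_relation_graph_products}, to obtain a reduced word $x'$ with exactly the same underlying vertex sequence as $y$. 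It remains to see that the exponents of $x'$ and $y$ coincide. For this one uses that $x' =_{T(\Gamma)} y$ with identical vertex sequences: abelianizing each vertex group (i.e. mapping $T(\Gamma)$ onto $\Z^{V\Gamma}$ by killing all the $\varphi$-twisting is too lossy, so instead) one argues syllable-by-syllable using a section / the structure of $T(\Gamma)$ as an iterated HNN or graph-of-groups extension, or more directly: since the vertex sequences agree and the word is reduced, two words with the same vertex sequence representing the same element must have equal exponents because the only relations are the defining ones, and collecting any discrepancy in the $i$-th syllable would force a shorter word, contradicting reducedness. Finally, $\lambda(x) = \lambda(y)$ is immediate once $n = m$ is established.

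The main obstacle I anticipate is the last step — showing that once the underlying vertex sequences have been matched, the exponents are forced to be equal — because the $\varphi$-maps mean exponents are only well-defined ``up to sign after shuffling,'' so one must be careful that the particular reduced representative pins them down. I would handle this by proving a clean lemma: if $u = v_1^{a_1}\cdots v_n^{a_n}$ and $u' = v_1^{b_1}\cdots v_n^{b_n}$ are both reduced, have the \emph{same} underlying vertex sequence $(v_1,\dots,v_n)$, and are equal in $T(\Gamma)$, then $a_i = b_i$ for all $i$. This can be proved by induction on $n$: the set $\mathrm{FL}$ computed from the vertex sequence determines which syllable can be brought to the front, and comparing images in the abelianization of $T(\Gamma)$ restricted to a single vertex (where the Klein relation $aba=b$ abelianizes to $b = 0$... ) — more robustly, by inducting via the retraction onto the standard parabolic on $V\Gamma \setminus \{v_n\}$ after cancelling the last syllable, using that $T(\Gamma)$ retracts onto each standard parabolic. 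I would present this lemma first, then assemble the proof as above.
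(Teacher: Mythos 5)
First, note that the paper does not prove this statement at all: it is quoted verbatim from \citep{foniqi2022} (see also \citep{foniqi2024twisted}), so there is no in-paper argument to compare yours against. Your existence argument (delete trivial syllables, merge joinable ones, terminate by descent on syllable count) is fine and is the standard one. The problem is the uniqueness half.

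The core of your uniqueness argument is the ``natural surjection'' $\rho \colon T(\Gamma) \to A(\overline{\Gamma})$ induced by $v \mapsto v$, and this map does not exist. Under $v \mapsto v$ the Klein relator $abab^{-1}$ is sent to $abab^{-1}$, which in $A(\overline{\Gamma})$ equals $a^{2} \neq 1$; your parenthetical claim that ``$aba=b$ maps to $aba^{-1}=b$'' is not what the identity-on-vertices assignment does, and no choice of signs $v \mapsto v^{\pm 1}$ repairs it. Concretely, for the Klein bottle group $K = \gp{a,b}{aba=b}$ the abelianization is $\Z \oplus \Z/2$, so there is no homomorphism $K \to \Z^{2}$ sending $a \mapsto a$, $b \mapsto b$. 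Without $\rho$ you cannot invoke \autoref{normal_theorem_graph_products} to get $n=m$ and the matching of vertex sequences, so the entire reduction to the graph-product normal form collapses. A second, independent gap is your fallback for matching exponents via ``the retraction onto the standard parabolic on $V\Gamma \setminus \{v_n\}$'': killing a generator $b$ that is the \emph{terminus} of a directed edge $[a,b\rangle$ turns $aba=b$ into $a^{2}=1$, which does not hold in the parabolic, so that retraction is only defined when the deleted vertex is not the terminus of any directed edge. The actual proof in the cited source proceeds by a direct adaptation of Green's argument (an action on reduced syllable sequences that tracks the sign changes recorded by the maps $\varphi_{s}$), not by a homomorphism onto the underlying RAAG; if you want a self-contained proof you should follow that route rather than trying to factor through $A(\overline{\Gamma})$.
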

We return to our map $\phi_{s}$, and show this function is well-defined in T-RAAGs, as a consequence of the normal form theorem.

\comm{
\begin{definition}
    Let~$s \in V\ga$, let~$w = w_{1}\dots w_{n} \in S_{s}^{\ast}$. For~$\epsilon = \pm 1$, we define
    \[ \mathrm{switch}_{s}(w) = \{w_{i} \; (1 \leq i \leq n) \mid \varphi_{w_{i}}(s^{\epsilon}) = s^{-\epsilon} \}
    \]
\end{definition}

\begin{lemma}
    Let~$s \in V\ga$, and~$u, v \in S_{s}^{\ast}$ with~$|\mathrm{switch}_{s}(u)| = m, \; |\mathrm{switch}_{s}(v)| = n$. Suppose~$m = n \Mod{2}$. Then for~$\epsilon = \pm 1$,
    \[ \varphi_{u}(s^{\epsilon}) = \varphi_{v}(s^{\epsilon}).
    \]
\end{lemma}

\begin{proof}
    Note if~$w \in S^{\ast}_{s}$ where~$|\mathrm{switch}_{s}(w)| = k$, then 
    \[ \varphi_{w}(s^{\epsilon}) = 
    \begin{cases}
        s^{\epsilon} & k \quad \text{even} \\
        s^{-\epsilon} & k \quad \text{odd}
    \end{cases} 
    \]
    The proof then follows.
\end{proof}}

\comm{
\textcolor{red}{TODO: this corollary does not look right, one example would be to take~$u, v$ to be vertices that commute with~$s$, and~$w$ a vertex with~$[w, v], [w, u\rangle$.}
\begin{cor}\label{cor:mod 2 switch}
    Let~$w \in S^{\ast}$. Let~$u, v \in S_{s}^{\ast}$, where~$|\mathrm{switch}_{s}(u)| = m$ and~$|\mathrm{switch}_{s}(v)| = n$. Suppose~$m = n \Mod{2}$. Then~$\varphi_{u}(w) = \varphi_{v}(w)$.
\end{cor}}
\comm{
\myRed{
NOTE: may remove these corollaries, perhaps unnecessary.}

For any word~$w \in S^{\ast}$, let~$\mathrm{rev}(w)$ denote the reverse of~$w$. 
\begin{cor}
    Let~$s \in V\ga$, let~$\epsilon = \pm 1$.
    \begin{enumerate}
        \item If~$w \in S_{s}^{\ast}$, then~$\phi_{w}(s^{\epsilon}) = \phi_{\mathrm{rev}(w)}(s^{\epsilon})$, i.e.~$\phi_{w}$ is the same for moving~$s$ left to right through~$w$, or right to left.
        \item Let~$u, v \in S_{s}^{\ast}$ such that~$u=_{T} v$. Then~$\phi_{u}(s^{\epsilon}) = \phi_{v}(s^{\epsilon})$. 
    \end{enumerate}
\end{cor}

\begin{cor}
    Let~$u, v \in S^{\ast}$ such that~$\phi_{u}(v), \phi_{v}(u)$ are defined. Then 
    \[ \phi_{\mathrm{rev}(v)}(u) = \phi_{v}(u).
    \]
    i.e.~$\phi$ describes both shuffling words left to right, or right to left.
\end{cor}}

\begin{lemma}\label{lem:star_map_well_defined}
    The map~$\varphi_{s}$ is well-defined for group elements in~$T(\Gamma)$. In particular, if~$u, x, y \in S^{\ast}$ are reduced words such that~$x =_{T(\ga)} y$ and~$\varphi_{u}(x)$,~$\varphi_{u}(y)$ are defined, then 
    \[ \varphi_{u}(x) =_{T(\ga)} \varphi_{u}(y).
    \]
\end{lemma}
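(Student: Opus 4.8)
The plan is to reduce the statement to the Normal Form Theorem (\autoref{thm:NFT}) by showing that $\varphi_u$ intertwines with the single generating move of syllable shuffling. Concretely, since $x =_{T(\ga)} y$ and both are reduced, \autoref{thm:NFT} gives a finite chain of reduced words $x = z_0, z_1, \dots, z_k = y$ where each $z_{j+1}$ is obtained from $z_j$ by one application of the shuffling relation of \autoref{def_traag_relation_graph_products}, i.e.\ swapping two adjacent syllables $w_i = a^m$, $w_{i+1} = b^n$ with $\{a,b\} \in E\ga$ and replacing them by $\varphi_{a^m}(w_{i+1}), \varphi_{b^n}(w_i)$. So it suffices to prove the single-step claim: if $z' $ is obtained from $z$ by one such elementary shuffle, and $\varphi_u(z)$, $\varphi_u(z')$ are both defined, then $\varphi_u(z) =_{T(\ga)} \varphi_u(z')$. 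Chaining these equalities along the path then yields $\varphi_u(x) =_{T(\ga)} \varphi_u(y)$.

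First I would set up the single-step reduction. Write $z = p\, a^m\, b^n\, q$ and $z' = p\, \varphi_{a^m}(b^n)\, \varphi_{b^n}(a^m)\, q$, where $p, q \in S^\ast$ and $\{a,b\}\in E\ga$. Because $\varphi_u$ is defined on both $z$ and $z'$, every vertex of $\supp(z)=\supp(z')$ lies in $\st(s)$ for each $s \in \supp(u)$; in particular $a, b \in \st(s)$ for all such $s$, and $\varphi_u$ applied to a word is letterwise by \autoref{def:Gemma_star_notation}. Thus $\varphi_u(z) = \varphi_u(p)\,\varphi_u(a^m)\,\varphi_u(b^n)\,\varphi_u(q)$ and similarly for $z'$, and the prefixes $\varphi_u(p)$ and suffixes $\varphi_u(q)$ agree; it remains to show
\[
\varphi_u(a^m)\,\varphi_u(b^n) =_{T(\ga)} \varphi_u\bigl(\varphi_{a^m}(b^n)\bigr)\,\varphi_u\bigl(\varphi_{b^n}(a^m)\bigr).
\]
Now write $\varphi_u(a^m) = a^{\delta m}$ and $\varphi_u(b^n) = b^{\eta n}$ with $\delta, \eta \in \{\pm 1\}$ determined by the parity of the number of "switching" vertices in $u$ (as in the commented-out lemma about $\mathrm{switch}_s$), and note $\varphi_{a^m}(b^n) = b^{\eta' n}$, $\varphi_{b^n}(a^m) = a^{\delta' m}$ for signs $\delta', \eta'$ depending only on the edge type of $\{a,b\}$ and on the parities of $m, n$. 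Since $\{a,b\} \in E\ga$, the pair $a^{\delta m}, b^{\eta n}$ shuffles in $T(\ga)$, and applying \autoref{def_traag_relation_graph_products} on the left-hand side produces exactly $\varphi_{a^{\delta m}}(b^{\eta n})\,\varphi_{b^{\eta n}}(a^{\delta m})$; so the identity reduces to a finite case check that this equals $\varphi_u(b^{\eta' n})\,\varphi_u(a^{\delta' m})$. Splitting into the three edge cases $[a,b]$, $[a,b\rangle$, $[b,a\rangle$, and using that the commutation/Klein relation on $a, b$ is insensitive to the overall signs $\delta, \eta$ (which is precisely the content of $\varphi_{s^2} = \mathrm{id}$ and the fact that $aba=b \iff a b^{\pm1} a = b^{\pm1}$ — check: $aba=b \Rightarrow ab^{-1}a = (aba)^{-1}\cdot(ab\cdot ba)$, more simply $a b^{-1} a^{-1} = b$ follows from $aba^{-1}=b^{-1}$), the equality holds.

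I expect the main obstacle to be the sign bookkeeping in that last case check: one must verify that $\varphi_u$ and the "inner" $\varphi_{a^m}, \varphi_{b^n}$ interact correctly, i.e.\ that $\varphi_u \circ \varphi_{a^m}$ restricted to $b^n$ agrees with what you get by first shuffling then applying $\varphi_u$, and this is exactly where one needs the observation (already noted after \autoref{Gemma notation}) that $\varphi_{st} = \varphi_s \circ \varphi_t$ together with $\varphi_{s^2} = \id$, so that only parities matter and the diagram commutes. A clean way to organise this is to prove the auxiliary fact that for any reduced $u$ with $\supp(u) \subseteq \bigcap_{v}\st(\cdot)$ as needed, $\varphi_u(x^\epsilon)$ depends only on $x$ and the parity of $|\mathrm{switch}_x(u)|$ — essentially reinstating the commented-out lemma — and then the edge-case verification becomes a short finite computation. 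Once the single-step claim is established, the general statement follows immediately by induction on the length $k$ of the shuffling path from $x$ to $y$ provided by \autoref{thm:NFT}, using transitivity of $=_{T(\ga)}$; I would also remark that one should check $\varphi_u$ remains defined on every intermediate word $z_j$, which holds because $\supp(z_j) = \supp(x)$ is constant along the path.
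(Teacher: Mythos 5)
Your proposal is correct and follows essentially the same route as the paper: reduce via the normal form theorem to a single elementary shuffle, then verify that $\varphi_{u}$ commutes with that shuffle applied to the two adjacent syllables. The only difference is in how the single-step identity is closed -- the paper uses the permutation-invariance observation $\varphi_{ug} = \varphi_{gu}$ in a short symbolic computation, whereas you propose an explicit case check over the three edge types with parity bookkeeping; both are valid.
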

The following observation will be useful for our proof. Let $g \in T(\Gamma)$ and let $w = g_{1}\dots g_{n} \in S^{\ast}$ be a reduced word representing $g$. Then for any permutation $\sigma \in S_{n}$ and any word $u \in S^{\ast}$, we have 
\begin{equation}\label{eqn: old cor3.10}
    \varphi_{g_{1}\dots g_{n}}(u) = \varphi_{g_{\sigma(1)}\dots g_{\sigma(n)}}(u). 
\end{equation}
\begin{proof}[Proof of \cref{lem:star_map_well_defined}]
    By \autoref{thm:NFT}, $x$ and~$y$ are related to each other by syllable shuffling only. Therefore, it is enough to show that this result holds for words 
    \[ x = gh, \quad y = \varphi_{g}(h)\varphi_{h}(g),
    \]
    for some syllables~$g = v_{i}^{a_{i}}$,~$h = v_{j}^{a_{j}}$ such that~$\{v_{i}, v_{j}\} \in E\ga$. We have
    \begin{align*}
        \varphi_{u}(x) &= \varphi_{u}(g)\varphi_{u}(h) \\
        &=_{T(\ga)} \varphi_{g}(\varphi_{u}(h))\varphi_{h}(\varphi_{u}(g)) \\
        &= \varphi_{gu}(h)\varphi_{hu}(g) \\
        &= \varphi_{ug}(h)\varphi_{uh}(g) \\
        &= \varphi_{u}(\varphi_{g}(h))\varphi_{u}(\varphi_{h}(g)) = \varphi_{u}(y),
    \end{align*}
    where in the fourth line we use \autoref{eqn: old cor3.10}. 
\end{proof}
Using this function, we can define the following which will be useful in understanding conjugacy in T-RAAGs. 
\begin{definition}
    Let~$s \in V\ga$, let~$w = w_{1}\dots w_{n} \in S_{s}^{\ast}$ be reduced. Define a \emph{full~$\varphi_{s}$-cyclic permutation} of~$w$ to be the reduced word
    \[ w' = \varphi_{s}(w) = \varphi_{s}(w_{1}\dots w_{n}).
    \]
    \comm{
    This encodes the operation of shuffling the letter~$s^{\epsilon}$, where~$\epsilon = \pm 1$, past~$w$, i.e.
    \[ s^{\epsilon} \cdot w =_{T} \varphi_{s}(w) \varphi_{w}(s^{\epsilon}).
    \]}
\end{definition}
\begin{remark}
    Whilst this may not appear similar to a cyclic permutation of~$w$, this definition is motivated by the definition of \emph{twisted cyclic permutations} given in \citep[Section 4.2]{Crowe2024}. \comm{In particular, a full~$\varphi_{s}$ cyclic permutation of~$w$ is equivalent to a twisted cyclic permutation of~$w$, where the automorphism~$\varphi \in \mathrm{Aut}(T(\ga))$ is a composition of inversions, based on which generators switch powers as~$s$ shuffles past~$w$. }
\end{remark}
\begin{lemma}\label{cor:conj full cycles}
    Let~$w, g \in S^{\ast}$ such that~$\varphi_{w}(g),$~$\varphi_{g}(w)$ are defined. Then
    \[ \varphi_{w}(g) = g \; \Rightarrow \; w \sim \varphi_{g}(w).
    \]
\end{lemma}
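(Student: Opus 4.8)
The plan is to reduce the statement to a single ``master'' shuffling identity: for any reduced words $u, v \in S^\ast$ for which $\varphi_u(v)$ and $\varphi_v(u)$ are defined, one has
\[
uv \;=_{T(\Gamma)}\; \varphi_u(v)\,\varphi_v(u).
\]
Granting this, the lemma is a one-line deduction. Since the definedness hypotheses are symmetric in $w$ and $g$, we may apply the identity with $u = g$ and $v = w$ to get $gw =_{T(\Gamma)} \varphi_g(w)\,\varphi_w(g)$. By hypothesis $\varphi_w(g) = g$ as words, so in particular $\varphi_w(g) =_{T(\Gamma)} g$, and hence $gw =_{T(\Gamma)} \varphi_g(w)\,g$, i.e.\ $gwg^{-1} =_{T(\Gamma)} \varphi_g(w)$. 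Therefore $w$ and $\varphi_g(w)$ represent conjugate elements, which is exactly $w \sim \varphi_g(w)$. (As a sanity check: in the Klein bottle group $K=\gp{a,b}{aba=b}$ one has $\varphi_a(b)=b$, so with $w=a,\ g=b$ the lemma gives $a \sim \varphi_b(a) = a^{-1}$, recovering the example from the introduction via $bab^{-1}=a^{-1}$.)

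So the content lies in the master identity, which I would prove by induction on the number of syllables of a reduced expression for $u$ (equivalently on $\lambda(u)$, by \autoref{thm:NFT}). The case $\lambda(u)=0$ is trivial. For $\lambda(u)=1$, say $u = s^m$ and $v = v_1^{a_1}\cdots v_k^{a_k}$ reduced, repeatedly apply the syllable-shuffling relation of \autoref{def_traag_relation_graph_products}, pushing $s^m$ rightward past the syllables of $v$ one at a time; this replaces each $v_i^{a_i}$ by $\varphi_{s^m}(v_i^{a_i})$ (using that $\varphi_{\varphi_{v_j^{a_j}}(s^m)} = \varphi_{s^m}$, since $\varphi$ depends only on the underlying vertex, not the power), and replaces $s^m$ by $\varphi_{v_k^{a_k}}\!\circ\cdots\circ\varphi_{v_1^{a_1}}(s^m)$, which equals $\varphi_{v_1^{a_1}\cdots v_k^{a_k}}(s^m) = \varphi_v(s^m)$ by the composition law for $\varphi$ and \autoref{eqn: old cor3.10}. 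Hence $s^m v =_{T(\Gamma)} \varphi_{s^m}(v)\,\varphi_v(s^m)$. For the inductive step write $u = u'w_n$ with $w_n$ a single syllable; then
\[
uv \;=\; u'\,(w_n v) \;=_{T(\Gamma)}\; u'\,\varphi_{w_n}(v)\,\varphi_v(w_n) \;=_{T(\Gamma)}\; \varphi_{u'}\!\bigl(\varphi_{w_n}(v)\bigr)\,\varphi_{\varphi_{w_n}(v)}(u')\,\varphi_v(w_n),
\]
where the second equality is the base case and the third is the inductive hypothesis applied to $u'\cdot\varphi_{w_n}(v)$ (its hypotheses hold since $\mathrm{supp}(u')\subseteq\mathrm{supp}(u)$ and $\mathrm{supp}(\varphi_{w_n}(v))=\mathrm{supp}(v)$). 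Using $\varphi_{u'}\!\circ\varphi_{w_n} = \varphi_{u'w_n} = \varphi_u$ and the power-independence $\varphi_{\varphi_{w_n}(v)}(u') = \varphi_v(u')$, the right-hand side becomes $\varphi_u(v)\,\varphi_v(u')\,\varphi_v(w_n) = \varphi_u(v)\,\varphi_v(u'w_n) = \varphi_u(v)\,\varphi_v(u)$, closing the induction.

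The only point requiring genuine care is the bookkeeping inside this induction: one must verify that pushing all of $v$ to the left of $u$ produces exactly $\varphi_u(v)$ on the left and $\varphi_v(u)$ on the right, with no residual power discrepancy from the intermediate sign flips. The two facts that make this work are (i) that $\varphi_x$ depends only on the vertex underlying $x$, so $\varphi_x = \varphi_{\varphi_y(x)}$ for any $y$, meaning flips accumulated en route do not affect later shuffles; and (ii) the composition and permutation invariance of $\varphi$ recorded after \autoref{def:Gemma_star_notation} and in \autoref{eqn: old cor3.10}, which collapse an iterated composition $\varphi_{x_1}\!\circ\cdots\circ\varphi_{x_k}$ into $\varphi_{x_1\cdots x_k}$ independently of the order of passage. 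Well-definedness of all the maps at the level of group elements (independence of chosen reduced representatives) is supplied by \autoref{lem:star_map_well_defined}.
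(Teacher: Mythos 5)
Your deduction is exactly the paper's proof: from $gw =_{T(\Gamma)} \varphi_g(w)\,\varphi_w(g)$ and the hypothesis $\varphi_w(g)=g$ one gets $gwg^{-1} =_{T(\Gamma)} \varphi_g(w)$, so the approaches coincide. The only difference is that you additionally supply an inductive proof of the shuffling identity $uv =_{T(\Gamma)} \varphi_u(v)\,\varphi_v(u)$, which the paper invokes implicitly as a consequence of \autoref{def_traag_relation_graph_products} and the composition/permutation properties of $\varphi$; your argument for it is correct and is a reasonable piece of bookkeeping to make explicit.
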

\begin{proof}
    We have that
    \[ gwg^{-1} =_{T(\ga)} \varphi_{g}(w)\varphi_{w}(g)g^{-1} = \varphi_{g}(w)gg^{-1} =_{T(\ga)} \varphi_{g}(w). \qedhere
    \]
\end{proof}
Note the reverse implication does not necessarily hold. 

\begin{definition}
    Let~$u,v \in S^{\ast}$. We say~$u$ is a \emph{full conjugate preserving (FCP)~$\varphi$-cyclic permutation} of~$v$ if there exists~$s \in V\ga$ such that~$u = \varphi_{s}(v)$ and~$\varphi_{v}(s) = s$. In particular, by \autoref{cor:conj full cycles},~$u \sim v$. 
\end{definition}

\section{Conjugacy problem in T-RAAGs}\label{sec:CP in T-RAAGS}
The aim of this section is to prove \cref{thm:CP solvable}. We will adapt a conjugacy criteria for graph products given by \citep[Lemma 3.12]{Ferov2016}, taking into account the necessary operation of full conjugate preserving~$\varphi$-cyclic permutations, which can occur in T-RAAGs whilst preserving conjugacy classes. 

\subsection{Cyclic reduction}
We again establish definitions for T-RAAGs similar to that of graph products given by \citep{Ferov2016}.
\comm{
\textcolor{red}{Note from Gemma: decided to just define CR for TRAAGs, rather than also include the analogous graph product defintions, which I agree made things confusing. Hopefully this section is clearer now.}}

\begin{definition}
    Let~$T(\Gamma) = \langle S \rangle$ be a T-RAAG. Let~$w = g_{1}\dots g_{n} \in S^{\ast}$ be a reduced word representing~$g \in T(\ga)$. Call the word~$w' = g_{i+1}\dots g_{n}g_{1}\dots g_{i}$, where~$i \in \{1, \dots, n-1\}$, a \emph{cyclic permutation} of~$w$. We say the element~$g' \in T(\ga)$ is a \emph{cyclic permutation} of~$g$ if~$g'$ can be represented by a cyclic permutation of some reduced word representing~$g$. 

    We say~$w$ is \emph{cyclically reduced} if all cyclic permutations of~$w$ are reduced. We say an element~$g \in T(\ga)$ is \emph{cyclically reduced} if either~$g = Id$, or there exists at least one reduced word representing~$g$ which is cyclically reduced. 
\end{definition}
\comm{
\textcolor{red}{TODO: This remark does not look fine. Gemma: agreed, removed.}
\begin{remark}
    If~$g_{1}\dots g_{n} \in S^{\ast}$ is CR, then for all~$v \in \mathrm{LL(g)} \cap \mathrm{FL}(g)$, then~$g_{i} \neq v^{a}$ for all~$1 \leq i \leq n$.
\end{remark}}
\comm{
We now establish analogous definitions and results for T-RAAGs.

\textcolor{red}{TODO: This is a repeated definition, CR defined above. Maybe we can identify some things like this where the base group is not relevant, one just needs a generating set.} 
\begin{definition}
    Let~$w \in S^{\ast}$ be a reduced word representing an element~$g \in T(\Gamma)$. We say~$w$ is \emph{cyclically reduced} (CR) if all cyclic permutations of~$w$ are reduced.
\end{definition}}
The proof of the following result follows similarly to \cite[Lemma 3.8]{Ferov2016}, which we include here for completeness. 
\begin{lemma}
    Let~$g \in T(\Gamma)$, let~$w = g_{1}\dots g_{n} \in S^{\ast}$ be a reduced word representing~$g$. If~$w$ is cyclically reduced, then all reduced words representing~$g$ are cyclically reduced.
\end{lemma}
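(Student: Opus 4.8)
The plan is to reduce the statement to the effect of a single syllable shuffle. By \autoref{thm:NFT}, any reduced word $y$ representing $g$ is obtained from $w$ by a finite chain of syllable shuffles in which every intermediate word is again reduced; so, by induction on the length of the chain, it is enough to prove: \emph{if $w = g_1\cdots g_n$ is reduced and cyclically reduced and $w'$ is obtained from $w$ by one syllable shuffle, then $w'$ is cyclically reduced.} Along the way I would record the elementary fact that a single syllable shuffle of a reduced word is reduced: swapping adjacent syllables $g_i = a^m$, $g_{i+1} = b^p$ with $\{a,b\}\in E\Gamma$ preserves reducedness because $a\in \mathrm{St}(b)$ and $b\in\mathrm{St}(a)$, so for any two fixed positions the multiset of vertices of the syllables lying strictly between them is unchanged by the swap, whence any admissible join in $w'$ is already one in $w$; this is in any case implicit in \autoref{def_traag_relation_graph_products}.

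Write $w_{(j)} = g_{j+1}\cdots g_n g_1\cdots g_j$ for the cyclic permutations of $w$ (with $w_{(0)} := w$), all reduced by hypothesis, and analogously $w'_{(j)}$ for those of $w'$. Suppose $w'$ is obtained by shuffling $g_i = a^m$ past $g_{i+1} = b^p$, so $w' = g_1\cdots g_{i-1}\,\varphi_{g_i}(g_{i+1})\,\varphi_{g_{i+1}}(g_i)\,g_{i+2}\cdots g_n$ with $\varphi_{g_i}(g_{i+1}) = b^{\pm p}$ and $\varphi_{g_{i+1}}(g_i) = a^{\pm m}$. For each $j\neq i$, the cut defining $w'_{(j)}$ does not fall between the two displaced syllables, so in both $w_{(j)}$ and $w'_{(j)}$ these syllables stay cyclically consecutive in the order $g_i,g_{i+1}$; hence $w'_{(j)}$ is exactly the image of $w_{(j)}$ under the same elementary shuffle, which is reduced since $w_{(j)}$ is reduced and shuffles preserve reducedness.

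The one remaining cyclic permutation is $w'_{(i)} = \varphi_{g_{i+1}}(g_i)\,u\,\varphi_{g_i}(g_{i+1}) = a^{\pm m}\,u\,b^{\pm p}$, where $u := g_{i+2}\cdots g_n g_1\cdots g_{i-1}$; I would show it reduced by excluding every kind of join. The factor $u$ is reduced, being a subword of the reduced word $w_{(i)} = g_{i+1}\,u\,g_i$, so no join lies inside $u$. The first and last syllables of $w'_{(i)}$ have distinct vertices $a\neq b$ and so cannot be joined. If the leading syllable $a^{\pm m}$ could be joined with some $a$-syllable of $u$, then every syllable of $u$ between them would lie in $\mathrm{St}(a)$; since also $b\in\mathrm{St}(a)$, the syllable $g_i$ and that $a$-syllable of $u$ would then be joinable inside the reduced word $w_{(i-1)} = g_i g_{i+1}\,u = a^m b^p u$ (which is $w$ itself when $i=1$), a contradiction. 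Symmetrically, joining the trailing syllable $b^{\pm p}$ with a $b$-syllable of $u$ would, using $a\in\mathrm{St}(b)$, force a join inside the reduced word $w_{(i+1)} = u\,g_i g_{i+1} = u\,a^m b^p$ (which is $w$ when $i=n-1$), again a contradiction. Hence $w'_{(i)}$ is reduced, which finishes the claim and thus the lemma.

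The crux, and the step I expect to be the main obstacle, is this last paragraph: choosing the right cyclic permutations $w_{(i-1)}$ and $w_{(i+1)}$ of $w$ to compare against, and using the hypothesis $\{a,b\}\in E\Gamma$ — equivalently $a\in\mathrm{St}(b)$ and $b\in\mathrm{St}(a)$ — to absorb the displaced syllable into the relevant star, so that a hypothetical join in $w'_{(i)}$ transports to a genuine join in a reduced cyclic permutation of $w$. Minor care is needed at the boundary indices $i=1$ and $i=n-1$, where $w_{(i-1)}$ or $w_{(i+1)}$ degenerates to $w$, and with the exponent changes introduced by $\varphi$, which play no role since joinability of syllables depends only on their vertices.
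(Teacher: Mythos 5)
Your proposal is correct and follows essentially the same strategy as the paper's proof: reduce to a single syllable shuffle via \cref{thm:NFT}, transport every cyclic permutation whose cut does not separate the swapped pair back to a cyclic permutation of $w$, and rule out joins in the one remaining permutation by comparing against suitable cyclic permutations of $w$ (your $w_{(i-1)}$ and $w_{(i+1)}$ play the role of the paper's $w$ and $p$), using $\{a,b\}\in E\Gamma$ to absorb the displaced syllable into the relevant star. The only difference is cosmetic bookkeeping of which cyclic permutation of $w$ is invoked in each sub-case.
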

\comm{
Our proof follows very similarly to that of \citep{Ferov2016}, with minor edits to account for possible changes to powers after syllable shuffling.}

\begin{proof}
    Suppose~$w$ is cyclically reduced. Let~$i \in \{1, \dots n-1\}$ such that~$(g_{i}, g_{i+1})$ can shuffle, i.e.
    \[ g_{i}g_{i+1} =_{T(\ga)} \varphi_{g_{i}}(g_{i+1})\varphi_{g_{i+1}}(g_{i}).
    \]
    

    For notation we will write~$\varphi_{g_{i}} (g_{i+1}) = g'_{i+1}$ and~$\varphi_{g_{i+1}}(g_{i}) = g'_{i}$. Consider the word 
    \[ w' = g_{1}\dots g_{i-1}g'_{i+1}g'_{i}g_{i+2}\dots g_{n} =_{T(\ga)} w.
    \]
    Let~$w'' \in S^{\ast}$ be a cyclic permutation of~$w'$. We have three cases to check:
    \begin{enumerate}
        \item[$(i)$] $w'' = g_{j+1}\dots g_{i-1}g'_{i+1}g'_{i}g_{i+2}\dots g_{n}g_{1}\dots g_{j}$, where~$j < i$, 
        \item[$(ii)$] $w'' = g'_{i}g_{i+2}\dots g_{n}g_{1}\dots g_{i-1}g'_{i+1}$, and
        \item[$(iii)$] $w'' = g_{j+1}\dots g_{n}g_{1}\dots g_{i-1}g'_{i+1}g'_{i}g_{i+2}\dots g_{j}$, where~$j > i$.
    \end{enumerate}
    For~$(i)$,~$w''$ can be rewritten as 
    \[ v = g_{j+1}\dots g_{i-1}g_{i}g_{i+1}g_{i+2}\dots g_{n}g_{1}\dots g_{j},
    \]
    which is a cyclic permutation of~$w$. Therefore~$v$ is reduced by assumption, and so~$w''$ must also be reduced by \autoref{thm:NFT}. The case~$(iii)$ can be proven similarly.
 
    For~$(ii)$, we first note that by assumption the subword~$g_{i+2}\dots g_{n}g_{1}\dots g_{i-1}$ is reduced. Suppose~$w''$ is not reduced. First, suppose that the syllable~$g'_{i}$ can be joined with a syllable~$g_{k}$, for some~$k \in \{i+2, \dots, n\}$ after shuffling. If this were true, then the syllable~$g_{i}$ and~$g_{k}$ could have been joined in~$w$, which contradicts our assumption that~$w$ is reduced. Similarly suppose~$g'_{i}$ can be joined with a syllable~$g_{l}$ for some~$l \in \{1,\dots, i-1\}$. Consider the word 
    \[ p = g_{i}g_{i+1}\dots g_{n}g_{1}\dots g_{i-1}.
    \]
    Our assumption would imply that the syllables~$g_{i}$ and~$g_{l}$ can be joined in~$p$, which is a cyclic permutation of~$w$. Since $w$ is cyclically reduced, $p$ is reduced, which again gives a contradiction. A symmetric argument implies that the syllable~$g'_{i+1}$ cannot be joined with any syllable~$g_{s}$, where~$s \in \{1, \dots i-1, i+2, \dots n\}$. Also the syllables~$g'_{i}$ and~$g'_{i+1}$ cannot be joined with each other, since otherwise~$g_{i}$ and~$g_{i+1}$ could be joined in~$w$, which contradicts our assumption that~$w$ is reduced. Therefore~$w''$ must be reduced. 

    We have shown that being cyclically reduced is preserved under syllable shuffling. By \autoref{thm:NFT}, this implies that all reduced words representing~$g$ are cyclically reduced. 
    \end{proof}
\comm{
\textcolor{red}{TODO: I think this is just the result of the lemma!? do we need this extra corollary? Gemma: Yes it's needed for the overall CP algorithm, as in how do we cyclically reduced words - we should cyclically reduced in the 'obvious' way}.}
We can now state the general form for cyclically reduced elements, which will be used in the first step of our conjugacy problem algorithm later. Note this form of cyclically reduced elements matches that of RAAGs.  

\begin{cor}\label{cor:CR words}
    Let~$g \in T(\Gamma)$ be cyclically reduced. Then for any reduced word~$w \in S^{\ast}$ representing~$g$,~$w \not =_{T(\ga)} uvu^{-1}$, for any~$u,v \in S^{\ast}$ where~$l(w) > l(v)$.
\end{cor}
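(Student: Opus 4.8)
The plan is to argue by contradiction: suppose $w = g_1 \dots g_n$ is a reduced word representing the cyclically reduced element $g$, yet $w =_{T(\ga)} u v u^{-1}$ for some $u, v \in S^{\ast}$ with $l(w) > l(v)$. We may assume $u$ is chosen of maximal length among all such factorisations, and that $u, v$ are reduced words. First I would record the key consequence of $l(w) > l(v)$: since $w$ is a reduced word of length $n = l(w)$ representing $g$, and $u v u^{-1}$ represents the same element, we have $|g| \le l(v) < n$, so the syllable length satisfies $\lambda(g) < n$ is not immediate — rather the point is that there must be cancellation when forming the product $u v u^{-1}$, because otherwise $l(w) = l(u) + l(v) + l(u) \ge l(v)$ would be consistent but the reduced form of $uvu^{-1}$ would have length $\ge l(v)$ only if no cancellation occurs; the strict inequality forces some interaction between $u^{-1}$ at the end and the core.

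The cleaner route is to reduce $u v u^{-1}$ to a reduced word and compare with $w$ using the Normal Form Theorem (\autoref{thm:NFT}). If $u$ has a syllable that can be cancelled against $v$ (i.e. $\mathrm{LL}(u) \cap \mathrm{FL}(v) \neq \varnothing$ after suitable syllable shuffling), strip it off: write $u = u' s^{a}$ with $v =_{T(\ga)} s^{-a} v' \varphi_{\bullet}(s^{a})$-type interactions, absorbing the conjugating syllable into $v$ to produce a new factorisation $w =_{T(\ga)} u' \tilde{v} (u')^{-1}$ with $l(u') < l(u)$; since we chose $u$ of *maximal* length, this is only possible if at each stage the conjugation is "genuine", i.e. the outermost syllable of $u$ does not cancel into $v$. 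So after maximising $l(u)$ we may assume $u v u^{-1}$ is *already reduced as written* — then $l(w) = l(uvu^{-1}) = 2 l(u) + l(v) \geq l(v)$, and the hypothesis $l(w) > l(v)$ gives $l(u) \geq 1$. But now $w$ is a reduced word of the form $u v u^{-1}$, so its last syllable is the inverse of its first syllable (more precisely, $\mathrm{FL}(g)$ and $\mathrm{LL}(g)$ share the relevant vertex $v \in \supp(u)$ closest to the ends), which means the first and last syllables of $w$ can be joined after cyclically permuting — contradicting that $w$ is cyclically reduced.

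To make the last step precise I would use the description of reduced products: $\mathrm{FL}(uvu^{-1})$ contains $\mathrm{FL}(u)$ (when $l(u) \geq 1$ and the product is reduced), and $\mathrm{LL}(uvu^{-1})$ contains $\mathrm{LL}(u^{-1}) = \mathrm{FL}(u)$. Pick $x \in \mathrm{FL}(u)$. Then there is a reduced word for $g$ beginning with a syllable $x^{b}$ and another ending with a syllable $x^{c}$; shuffling both to the appropriate ends of a single reduced word $w''$ and then cyclically permuting, the initial syllable $x^{b}$ lands adjacent to the final syllable (a power of $x$), so these two syllables can be joined, violating cyclic reducedness via the preceding lemma (all reduced words representing $g$ are cyclically reduced). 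The main obstacle I anticipate is the bookkeeping in the "maximise $l(u)$ and strip cancellations" step: in a T-RAAG the cancelling syllable does not simply vanish into $v$ but conjugates the core through the maps $\varphi_{s}$, so I must check that the new core $\tilde v$ is still a legitimate reduced word with $l(\tilde v) \le l(v) + (\text{small})$ and that the strict length inequality $l(w) > l(\tilde v)$ is preserved — this is where \autoref{lem:star_map_well_defined} and the fact that $\varphi_{s}$ preserves length (it only flips signs of exponents) are needed.
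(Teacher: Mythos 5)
Your endgame is the right one, and it is essentially the argument the paper leaves implicit (the paper gives no proof at all, presenting the statement as a direct consequence of the preceding lemma): if $uvu^{-1}$ is reduced as a word with $l(u)\geq 1$, its first and last syllables are powers of the same vertex, so the cyclic permutation bringing the last syllable to the front makes them joinable; hence $uvu^{-1}$ is a reduced but not cyclically reduced word for $g$, contradicting the lemma that all reduced words representing a cyclically reduced element are cyclically reduced. (One small point: argue directly on the word $uvu^{-1}$ rather than via $\mathrm{FL}(g)\cap\mathrm{LL}(g)$ --- a common vertex in $\mathrm{FL}\cap\mathrm{LL}$ could a priori be witnessed by a \emph{single} occurrence, whereas here the first syllable of $u$ and the last syllable of $u^{-1}$ are genuinely distinct occurrences.)

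The genuine gap is in the reduction to that case. First, ``choose $u$ of maximal length'' is not available: the admissible conjugators are unbounded (replace $u$ by $uz$ for any $z$ representing an element commuting with the element of $v$, e.g.\ powers of $v$ itself; the pair $(uz,v)$ still satisfies the hypotheses), and in any case the existence of a factorisation with a \emph{shorter} conjugator would not contradict maximality of the chosen one --- you need a \emph{minimal} choice for the stripping argument to bite. Second, even with minimality, absorbing the innermost syllable $s^{a}$ of $u$ replaces $v$ by the reduced form of $s^{a}vs^{-a}$, whose length can be as large as $l(v)+2|a|-2$ when the cancellation is partial or one-sided; the hypothesis $l(w)>l(\tilde v)$ is then not preserved, so the induction does not close --- you flag this but do not resolve it. Third, the dichotomy ``either $\mathrm{LL}(u)\cap\mathrm{FL}(v)\neq\varnothing$ or $uvu^{-1}$ is reduced as written'' is false: a syllable of $u$ whose vertex lies in $\st(\supp(v))$ can shuffle past all of $v$ and join with its partner in $u^{-1}$ without touching $v$ (the correct condition is $\mathrm{LL}(u)\cap\mathrm{FL}(vu^{-1})=\varnothing$). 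This across-the-core case is precisely the one the paper isolates with the set $S(x)=\supp(x)\cap\st(\supp(x))$ in the proof of the conjugacy criterion, and it needs separate treatment here as well, since in a T-RAAG the shuffled syllable may return with its exponent flipped and therefore need not cancel against its partner.
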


\comm{
Another simple observation: if~$g_1$ is a cyclic permutation of~$g$ then~$g_1$ can be uniquely factorised as~$s(g)'p'$, where~$s(g)'$ is~$s(g)$ after the shufflings are performed, and~$p'$ is a cyclic permuation of~$p(g)$ after the shufflings are performed.

\begin{lemma} Let~$g \in T(\Gamma)$. Then the following are equivalent:
\begin{itemize}
\item[(i)] $g$ is~$\Gamma$-cyclically reduced,
\item[(ii)] $(\fl(g) \cap \llt(g)) \setminus S(g) = \emptyset~$,
\item[(iii)] $\fl(p(g)) \cap \llt(p(g)) = \emptyset~$,
\item[(iv)] $p(g)$ is~$\Gamma$-cyclically reduced.
\end{itemize}
\end{lemma} 

\begin{proof}
(i)~$\implies$ (ii): let~$g = g_1\cdot g_n$ be a a~$\Gamma$-reduced expression for~$g$. WLOG we may assume that~$s(g) = g_1 \cdots g_s$, where~$s = |S(g)|$, and~$p(s) = g_{s+1} \cdots g_s$.
Suppose~$v \in (\fl(g) \cap \llt(g)) \setminus S(g)$; then there are~$1 \leq i < j \leq n$ such that~$g_i, g_j \in \langle v \rangle$. Since~$v \in \fl(g) \cap \llt(g)$ we see that~$g_i$ can be shuffled at the front, and~$g_j$ can be shuffled at the back of~$g$; hence:
\[
w = g_i'(g_1 \cdots g_{i-1})'(g_{i+1} \cdots g_{j-1})(g_{j+1} \cdots g_n)' g_j'
\]
is also a~$\Gamma$-reduced word for~$g$. On the other hand, the expression:
\[
w_1 = g_j' g_i'(g_1 \cdots g_{i-1})'(g_{i+1} \cdots g_{j-1})(g_{j+1} \cdots g_n)' 
\]
is not reduced, which contradicts the assumption that~$g$ is~$\Gamma$-cyclically reduced, as~$w_1$ is a cyclic permutation of~$w$.

(ii)~$\implies$ (iii): suppose that~$(\fl(g) \cap \llt(g)) \setminus S(g) = \emptyset$. From our earlier observations we have~$\fl(g)  = S(g) \sqcup \fl(p(g))$, and~$\llt(g)  = S(g) \sqcup \llt(p(g))$. Therefore~$\fl(p(g)) \cap \llt(p(g)) = \emptyset~$.

(iii)~$\implies$ (iv): if~$\fl(p(g)) \cap \llt(p(g)) = \emptyset~$, then by definition,~$p(g)$ is~$\Gamma$-cyclically reduced.

(iv)~$\implies$ (i): suppose that~$p(g)$ is~$\Gamma$-cyclically reduced, and~$\exists \, v \in \fl(p(g)) \cap \llt(p(g))$. Let~$p_1\cdots p_m$ be a reduced expression for~$p(g)$. There cannot be~$1 \leq i < j \leq m$ such that~$p_i, p_j \in \langle v \rangle$, as~$p(g)$ is~$\Gamma$-cyclically reduced. So there is~$1 \leq i \leq m$ such that~$p_1\cdots p_m$ can be rewritten by shuffling to~$p_i'(p_1 \cdots p_{i-1})'(p_{i+1} \cdots p_m)$, and to~$(p_1 \cdots p_{i-1})(p_{i+1} \cdots p_m)'p_i'$ too. This means that~$p_i$ shuffles with all the other syllables form~$p(g)$ and hence the vertex~$v$ is adjacent to all the vertices in~$P(g) \setminus \{v\}$. But since~$v$ is also connected to all the vertices in~$S(g)$ by the definition of~$S(g)$, we obtain~$v \in S(g)$. This is a contradiction as~$v \in \supp(p(g)) \subseteq P(g)$, hence we may assume that~$\fl(p(g)) \cap \llt(p(g)) = \emptyset$. \newline
As observed before,~$\fl(g)  = S(g) \sqcup \fl(p(g))$, and~$\llt(g)  = S(g) \sqcup \llt(p(g))$. Since we have the equality~$\fl(p(g)) \cap \llt(p(g)) = \emptyset$, we obtain~$\fl(g) \cap \llt(g) = S(g)$. Let~$w = g_1\cdots g_n$ be a~$\Gamma$-reduced expression for~$g$. Suppose there are~$1 \leq i, j \leq n$ such that~$g_i$ can be shuffled to the front of~$w$, and~$g_j$ can be shuffled to the end of~$w$, and both~$g_i, g_j$ have the same base vertex. Since ~$\fl(g) \cap \llt(g) = S(g)$ we see that~$_i, g_j \in \langle s \rangle$ for some~$s \in S(g)$, as~$w$ is~$\Gamma$-reduced. This implies~$i=j$ and consequently that~$g$ is~$\Gamma$-cyclically reduced.
\end{proof}
\myRed{Question Gemma: is this Lemma necessary? I agree the S(x) notation is needed for the conj criteria, but unsure where the lemma fits in?}

\myRed{Islam: to me it also seems to not be used. We could check it one more time together too.}

\myRed{Gemma: I agree the Lemma is unnecessary, but the notation is useful for the Proposition below. Perhaps we remove the lemma, and move the notation to the next section for clarity?}}

\subsection{Conjugacy criterion for T-RAAGs}

We can now prove the following conjugacy criteria in T-RAAGs, similar to that of \citep[Lemma 3.12]{Ferov2016}, which will allow us to prove \cref{thm:CP solvable}. 

\begin{prop}\label{prop:conj criteria}
    Let $x,y \in T(\ga)$ be cyclically reduced. Then~$x$ and $y$ are conjugate in $T(\ga)$ if and only if there exists a finite sequence~$w = w_{0}, w_{1}, \dots,w_{m} = w'$ of cyclically reduced words, such that $w$ and $w'$ represent the elements $x$ and $y$ respectively, and each pair $(w_{i}, w_{i+1})$, for $0 \leq i \leq m-1$, is related via:
    \begin{itemize}
        \item[$(i)$] syllable shuffling,
        \item[$(ii)$] a cyclic permutation, or
        \item[$(iii)$] a full conjugate preserving~$\varphi$-cyclic permutation.
    \end{itemize}
In particular, if~$x$  and $y$ are conjugate, then~$|x| = |y|$, $\lambda(x) = \lambda(y)$ and~$\mathrm{supp}(x) = \mathrm{supp}(y)$.
\end{prop}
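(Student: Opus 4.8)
The plan is to prove the two implications of the equivalence separately, and then to read off the ``in particular'' clause from the forward direction.

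The backward direction is routine. One checks that each of the three moves, applied to a cyclically reduced word, produces a cyclically reduced word representing a conjugate element, and that each preserves $l(\cdot)$, $\lambda(\cdot)$ and $\mathrm{supp}(\cdot)$. Syllable shuffling gives the same group element by \autoref{thm:NFT}, preserves cyclic reducedness by the lemma just proved, and only permutes syllables and possibly reverses exponents. A cyclic permutation $g_1\cdots g_n\mapsto g_{i+1}\cdots g_n g_1\cdots g_i$ is conjugation by $(g_1\cdots g_i)^{-1}$, is again cyclically reduced since its source is, and merely reorders syllables. A full conjugate preserving $\varphi$-cyclic permutation $w\mapsto\varphi_s(w)$ is conjugate to $w$ by \autoref{cor:conj full cycles}, and since $\varphi_s$ fixes the underlying vertex of each syllable and at most reverses its exponent, it creates no joinable or trivial syllables and so preserves reducedness, cyclic reducedness, $l$, $\lambda$ and $\mathrm{supp}$. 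Composing along $w_0,\dots,w_m$ gives $x\sim y$. For the ``in particular'' clause, all the $w_i$ are cyclically reduced (hence reduced) words with the same $l$, $\lambda$ and $\mathrm{supp}$; since all reduced words for an element share a word length which is minimal (syllable shuffling preserves $l$), we get $l(w_0)=|x|$ and $l(w_m)=|y|$, hence $|x|=|y|$, while $\lambda(x)=\lambda(w_0)=\lambda(w_m)=\lambda(y)$ and $\mathrm{supp}(x)=\mathrm{supp}(w_0)=\mathrm{supp}(w_m)=\mathrm{supp}(y)$ by \autoref{thm:NFT}.

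The forward direction is the substance. Suppose $x\sim y$ with both cyclically reduced, and choose $g\in T(\Gamma)$ with $y=_T gxg^{-1}$ of minimal word length; induct on $|g|$. If $|g|=0$ then $x=_T y$ and the given reduced words are related by syllable shuffling (\autoref{thm:NFT}). If $|g|\ge 1$, fix a reduced word $g=sh$ with $s\in S$ a single generator and $h$ reduced of length $|g|-1$, and set $z:=hxh^{-1}$, so that $y=_T szs^{-1}$. The key claim is that minimality of $|g|$ forces $z$ to be cyclically reduced: otherwise \autoref{cor:CR words} gives reduced words $p,q$ with $z=_T pqp^{-1}$ and $l(q)<l(z)$, and by tracking the cancellation in $h\cdot(\text{a reduced word for }x)\cdot h^{-1}$ via $\mathrm{FL}$ and $\mathrm{LL}$ one produces a conjugator of $x$ to $q$ strictly shorter than $g$, a contradiction. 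Granting this, $z$ is cyclically reduced and conjugate to $x$ via $h$ with $|h|<|g|$, so by induction $z$ is connected to $x$ by a sequence of moves $(i)$--$(iii)$, and it only remains to connect $y$ to $z$, i.e.\ to treat $y=_T szs^{-1}$ with $s$ a single generator and $z,y$ both cyclically reduced.

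Writing $s=v^{\epsilon}$, one splits this single-generator step according to the position of $v$ relative to $z$. If $v\in\mathrm{LL}(z)$ (the case $v\in\mathrm{FL}(z)$ being symmetric), shuffle to a reduced word $z\equiv w\,v^{a}$; then $szs^{-1}=_T v^{\epsilon}w v^{a-\epsilon}$, and a single cyclic permutation of a reduced word for this element — sending the leading $v^{\epsilon}$ to the end, where it merges with the trailing $v$-syllable — returns $w v^{a}\equiv z$, so $y$ and $z$ are connected by syllable shuffling and one cyclic permutation. If $v\notin\mathrm{supp}(z)$, then cyclic reducedness of $szs^{-1}$ must force $v$ to be adjacent to every vertex of $\mathrm{supp}(z)$; in that case $szs^{-1}=_T\varphi_{v}(z)\,\varphi_{z}(v^{\epsilon})\,v^{-\epsilon}$, and one needs $\varphi_{z}(v^{\epsilon})=v^{\epsilon}$ (using $\varphi_{v}^{2}=\mathrm{id}$ and the precise notion of cyclic reducedness to exclude the alternative $\varphi_{z}(v^{\epsilon})=v^{-\epsilon}$), so that $szs^{-1}=_T\varphi_{s}(z)$ with $\varphi_{z}(s)=s$ — exactly a full conjugate preserving $\varphi$-cyclic permutation of $z$. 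The main obstacle I foresee is precisely here: proving that minimality of the conjugator really forces each intermediate conjugate to be cyclically reduced, and arranging the single-generator case analysis so that every way a generator can conjugate one cyclically reduced element to another is absorbed by moves $(i)$, $(ii)$ or $(iii)$; modulo this, the argument is bookkeeping with the maps $\varphi_{\bullet}$ from \autoref{sec:shuffles T-RAAGS} together with \autoref{thm:NFT}.
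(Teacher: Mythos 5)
Your backward direction is fine (and more careful than the paper's one-line dismissal), and your plan for the forward direction is genuinely different from the paper's: the paper follows Ferov's graph-product argument, fixing a conjugator $g'$ of minimal \emph{syllable} length over the whole orbit $X^{T_{S(x)}}$ of $x$ under cyclic and FCP $\varphi$-cyclic permutations, arranging $g''x''$ to be reduced by induction on $|\llt(g')\cap\fl(x')|$, and then forcing $g''=1$ by comparing syllable lengths of the reduced products $g''x''$ and $yg''$. Your letter-peeling induction is an attractive alternative, but it has a genuine gap at exactly the point you flag. The ``key claim'' --- that minimality of $|g|$ forces $z=hxh^{-1}$ to be cyclically reduced --- is the entire substance of the theorem, and the justification you sketch does not close. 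If $z=_{T(\ga)}pqp^{-1}$ with $l(q)<l(z)$, exhibiting a conjugator from $x$ to $q$ shorter than $g$ contradicts nothing: $g$ is minimal among conjugators from $x$ to $y$, not from $x$ to $q$, so you would still need to splice that conjugator with $s$ and $p$ into a conjugator from $x$ to $y$ of length less than $|g|$, and controlling the cancellation in that splice is precisely the $\fl$/$\llt$ analysis the paper's induction performs. There is also a quantifier problem: you fix an arbitrary reduced word $g=sh$, but distinct reduced words for $g$ can have distinct first letters, and the claim (if true) need not hold for every choice; arguing that \emph{some} first letter works again forces you into the same analysis of $\llt(g)\cap\fl(x)$.

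A second, smaller gap sits in the single-generator step. Your case split ($v\in\llt(z)$, $v\in\fl(z)$, $v\notin\supp(z)$) omits $v\in\supp(z)\setminus(\fl(z)\cup\llt(z))$ (easily excluded, since then $szs^{-1}$ is visibly not cyclically reduced), but more importantly it glosses over the case $v\in S(z)$, where the unique $v$-syllable shuffles past all of $z$. There the leading $v^{\epsilon}$ may twist as it crosses $w$, so that $szs^{-1}=_{T(\ga)}\varphi_{v}(w)\,v^{a-2\epsilon}$ rather than a plain rotation of $wv^{a}$; showing that the surviving subcases are still captured by moves $(i)$--$(iii)$ (a shuffle followed by a cyclic permutation, or an FCP move) needs the exponent bookkeeping done explicitly. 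This is exactly the phenomenon the paper isolates by working with the orbit $X^{T_{S(x)}}$ and proving $g_{k}\notin S(x'')$. In short: your architecture is plausible, but the one claim you defer is where all of the difficulty lives, and the argument offered for it is not yet a proof.
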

Note if~$T(\Gamma) = A(\Gamma)$ is a RAAG, then FCP~$\varphi$-cyclic permutations do not have any affect on words, i.e.~$\varphi_{s}(w) = w$. In particular, this result coincides with the well known fact in RAAGs that two cyclically reduced elements are conjugate if and only if they are related by a finite sequence of commutations (i.e. syllable shuffles) and cyclic permutations. 

The following notation will be required in the proof of \autoref{prop:conj criteria}.
\begin{definition}[P-S decomposition]
Let~$g \in T(\Gamma)$. Define~$S(g) = \supp(g)  \cap \st(\supp(g))$, and~$P(g) = \supp(g) \setminus S(g)$. One can write~$g$ uniquely as a reduced product~$g = s(g)p(g)$, with~$\supp(s(g)) = S(g)$ and~$\supp(p(g)) = P(g)$.
\end{definition}

One has~$\fl(g)  = S(g) \sqcup \fl(p(g))$,~$\llt(g)  = S(g) \sqcup \llt(p(g))$, and~$S(p(g)) = \emptyset$.

\begin{remark}
    Let~$g \in T(\Gamma)$ where~$g_{1}\dots g_{n}$ is a reduced expression representing~$g$. If~$v \in S(g)$, then there exists~$g_{i} \in  \langle v \rangle$ such that~$g_{i}$ shuffles with all other syllables in~$g$. In particular,~$v \in \mathrm{FL}(g) \cap \mathrm{LL}(g)$. 
\end{remark}

\begin{proof}[Proof of \autoref{prop:conj criteria}]
    Each of the three operations preserve conjugacy in~$T(\Gamma)$, so the reverse direction is immediate. For the forward direction, WLOG assume that~$\lambda(x) \geq \lambda(y)$. Let~$X \subseteq T(\ga)$ denote the set of all elements obtained from~$x$ via cyclic and FCP~$\varphi$-cyclic permutations. In particular,~$X^{T_{S(x)}} \subseteq x^{T(\Gamma)} = \{gxg^{-1} \mid g \in T(\ga)\}$. Choose~$x' \in X^{T_{S(x)}}$ such that the corresponding~$g' \in T(\Gamma)$, where~$g'x'g'^{-1} = y$, is of minimal syllable length. 

    \textbf{Claim:} There exists~$x'' \in X^{T_{S(x)}}$,~$g'' \in T(\Gamma)$ such that~$\lambda(g'') = \lambda(g')$,~$g''x''g''^{-1} = y$, and~$g''x''$ is reduced.\\
    \textbf{Proof of claim:} We will prove by induction on~$c = |\mathrm{LL}(g') \cap \mathrm{FL}(x') |$. 

    Base case:~$c = 0$. Here~$g'x'$ is reduced, so we can set~$g'' = g'$ and~$x'' = x'$ for the claim to hold.

    Now suppose~$c > 0$ and the claim holds for all~$c' < c$. Let~$g_{1}\dots g_{k}$ and~$x_{1}\dots x_{n}$ be reduced expressions for~$g'$ and~$x'$ respectively. WLOG let~$v \in \mathrm{LL}(g') \cap \mathrm{FL}(x')$ such that~$g_{k}, x_{1} \in \langle v \rangle$. We have    \[ y =_{T(\ga)} g_{1}\dots (g_{k}x_{1})x_{2}\dots x_{n}\cdot g^{-1}_{k}\dots g^{-1}_{1}.
    \]
    Suppose~$g_{k} = x^{-1}_{1}$. Then we would have
    \[ y =_{T(\ga)} g_{1}\dots g_{k-1}x_{2}\dots x_{n}x_{1}g^{-1}_{k-1}\dots g^{-1}_{1}.
    \]
    One could then replace~$x'$ by~$x_{2}\dots x_{n}x_{1}$, which is a cyclic permutation of~$x'$, and also can replace~$g'$ by~$g_{1}\dots g_{k-1}$. Since~$x_{2}\dots x_{n}x_{1} \in X^{T_{S(x)}}$, this contradicts the minimality of~$g'$. Therefore~$g_{k} \neq x^{-1}_{1}$. 

    \comm{ OLD ARGUMENT GEMMA
    We now consider what happens if~$x_{1}$ can shuffle to the end of~$x'$. First, suppose 
    \[ x_{1}\cdot x_{2}\dots x_{n} =_{T} \varphi_{v}(x_{2}\dots x_{n})x^{-1}_{1},
    \]
    i.e.~$\varphi_{x_{2}\dots x_{n}}(x_{1}) = x^{-1}_{1}$. Then we have
    \[ y =_{T} g_{1}\dots g_{k} \varphi_{v}(x_{2}\dots x_{n})x^{-2}_{1}g^{-1}_{k-1}\dots g^{-1}_{1}.
    \]
    
    \textcolor{red}{Not 100\% convinced about this step - possibility of torsion}

    If cancellation occurs, then~$x^{2}_{1} = 1$, i.e.~$x_{1} = x^{-1}_{1}$. Then we would have
    \[ y _{T}  g_{1}\dots g_{k-1} x_{1}\varphi_{v}(x_{2}\dots x_{n})g^{-1}_{k-1}\dots g^{-1}_{1}.
    \]
    Note 
    \[ x' = x_{1}\dots x_{n} =_{T} \varphi_{v}(x_{2}\dots x_{n})x^{-1}_{1} =_{T} \varphi_{v}(x_{2}\dots x_{n})x_{1} \leftrightarrow x_{1}\varphi_{v}(x_{2}\dots x_{n}),
    \]
    and so we could have chosen~$x' = x_{1}\varphi_{v}(x_{2}\dots x_{n})$ which breaks our assumption of minimality of~$g'$.  

    Secondly, suppose~$\varphi_{x_{2}\dots x_{n}}(x_{1}) = x_{1}$, i.e.
    \[ x_{1}\cdot x_{2}\dots x_{n} =_{T} \varphi_{v}(x_{2}\dots x_{n})x_{1}.
    \]
    Then again we break our minimality assumption. In particular,~$v \not \in \mathrm{LL}(g'x_{1}) \cap \mathrm{FL}(x_{2}\dots x_{n})$. This implies that
    \[ \mathrm{LL}(g'x_{1}) \cap \mathrm{FL}(x_{2}\dots x_{n}x_{1}) \subsetneq \mathrm{LL}(g') \cap \mathrm{FL}(x').
    \]
    From here we apply the inductive hypothesis, which proves the claim.}

    If~$v \in S(x)$, then~$g_kxg_k^{-1} \in X^{T_{S(x)}}$ which again contradicts the choice of~$x'$ and~$g'$. Hence,~$v \not \in S(x)$, and therefore~$\llt(g') = \llt(g'x_1)$ and also~$v \not \in \fl(x_2 \cdots x_nx_1)$. Note that if~$g_i$ can be shuffled to the end of~$g'$, then~$\{u, v\} \in E\Gamma$, where~$g_i \in \langle u \rangle$. If~$w \in \fl(x_2 \cdots x_nx_1) \setminus \fl(x')$ then~$\{v, w\} \not \in E\Gamma$ and so~$w \not \in \llt(g'x_1)$.

From this we can conclude that~$v \not \in \llt(g'x_1) \cap \fl(x_2 \cdots x_nx_1) \subseteq \llt(g') \cap \fl(x')$, hence the intersection~$\llt(g'x_1) \cap \fl(x_2 \cdots x_nx_1)$ is a proper subset of~$\llt(g') \cap \fl(x')$. The inductive hypothesis completes the proof of the claim. 
    
    We now have that~$g''x'' = yg''$. Since~$g''x''$ is reduced,~$\lambda(g''x'') = \lambda(g'') + \lambda(x'') = \lambda(g') + \lambda(x') = n+k$. Also~$\lambda(yg'') \leq \lambda(y) + \lambda(g'') = m+k$, where~$\lambda(y) = m$. Therefore~$\lambda(x) \leq \lambda(y)$. By our assumption, this implies~$\lambda(x) = \lambda(y)$, and so~$yg''$ is also reduced. We now assume~$y_{1}\dots y_{n}$,~$x_{1}\dots x_{n}$ and~$g_{1}\dots g_{k}$ are reduced representatives of~$y, x''$ and~$g''$ respectively. We have
    \[ y_{1}\dots y_{n} =_{T(\ga)} g_{1}\dots g_{k}x_{1}\dots x_{n}g^{-1}_{k}\dots g^{-1}_{1}.
    \]
    By \autoref{thm:NFT}, there exists reductions on the right hand side after shuffling. Assume that the syllable~$g_k^{-1}$ can be joined up with~$g_j$ for some~$1 \leq j \leq k$. This means that~$g_k$ shuffles with~$x_i$ for all~$1 \leq i \leq n$. In particular,
    \[ y_{1}\dots y_{n} =_{T(\ga)} g_{1}\dots g_{k-1}g^{2}_{k}x_{1}\dots x_{n}g^{-1}_{k-1}\dots g_{1}^{-1}.
    \]
    Rearranging gives
\[  y_1 \cdots y_n g_1 \cdots g_{k-1} =_{T(\ga)} g_1 \cdots g_{k-1}g_k^2 x_1 \cdots x_n
\]
The right hand side is reduced since~$g''x''$ is reduced, and has syllable length~$n+k$. The left hand side is also reduced, since it is a subword of~$yg''$. However the syllable length here is~$n+k-1$, which contradicts \cref{thm:NFT}. In particular,~$g_{k} \not \in S(x'')$.
    
\comm{
    First suppose~$g_{k} \in \lk(\mathrm{supp}(x'))$. If~$g_{k}x' = \varphi_{v}(x')g_{k}$ then, similar to before, cancellation contradicts our assumption that~$g''$ is of minimal length. Similarly we contradict minimality if~$g_{k}x' = \varphi_{v}(x')g^{-1}_{k}$ and \myRed{~$g^{2}_{k} = 1$}. }
    
    WLOG we assume there exists~$w \in \mathrm{LL}(x'') \cap \mathrm{LL}(g'')$ such that~$g_{k}, x_{n} = w^{\pm a}$, for some $a \in \Z_{\neq 0}$. If~$x_{n} = g^{-1}_{k}$, we have
    \[ y_{1}\dots y_{n}g_{1}\dots g_{k-1} =_{T(\ga)} g_{1}\dots g_{k} x_{1}\dots x_{n-1}x^{2}_{n}.
    \]
    Both sides are reduced by construction, but differ in syllable length, which contradicts \autoref{thm:NFT}. Therefore~$x_{n} = g_{k}$ and so after cancellation we have
    \[ y_{1}\dots y_{n} =_{T(\ga)} g_{1}\dots g_{k-1} x_{n}x_{1}\dots x_{n-1}g^{-1}_{k-1}\dots g^{-1}_{1}. 
    \]
    Again a contradiction, as we could replace~$x''$ with the cyclic permutation~$x_{n}x_{1}\dots x_{n-1}$, and replace~$g''$ with a shorter conjugator. Indeed this contradiction always occurs unless~$g=1$. Therefore~$y = x'' \in X$, which completes the proof. 
\end{proof}

\comm{
\begin{remark}
    Set of possible full~$\varphi$-cyclic permutations can be determined from input words~$x, y$.
\end{remark}}

\comm{
\subsection{Conjugacy criterion: Islam version}

\begin{lemma}
Let~$x, y$ be~$\Gamma$-cyclically reduced elements of~$T = T(\Gamma)$. Then~$x \sim_{T} y$ if and only if all the following are true:
\begin{itemize}
\item[(i)] $|x| = |y|$ and~$\supp(x) = \supp(y)$,
\item[(ii)] $p(x)$ is a cyclic permuatation of~$p(y)$,
\item[(iii)] $s(y) \in s(x)^{T_{S(x)}}$.
\end{itemize}
\end{lemma}

\begin{proof}
Let~$x, y \in T(\Gamma)$ be~$\Gamma$-cyclically reduced such that~$x \sim_{T} y$. WLOG assume that~$|x| \geq |y|$. Denote by~$X$ the set of all cyclic permutations of~$x$. 

Clearly,~$X^{T_{S(x)}} \subseteq x^T$. 

Pick~$x' \in  X^{T_{S(x)}}$  such that the corresponding~$g'$ with~$g'x'g'^{-1} = y$, is of minimal length.

First, we show by induction on~$|\llt(g') \cap \fl(')|$ that there are elements~$x'' \in X^{T_{S(x)}}$, and~$g'' \in T$, such that~$|g''| = |g|$,~$g''x''g''^{-1} = y$ and the product~$g''x''$ is~$\Gamma$-reduced.

If~$|\llt(g') \cap \fl(x')| = 0$ then the product~$g'x'$ is clearly~$\Gamma$-reduced and our claim holds for~$g'' = g'$ and~$x'' = x'$. 

Suppose now that~$|\llt(g') \cap \fl(x')| = c > 0$ and that the statement holds for~$c' < c$.

Let~$g_1 \cdots g_k$ and~$x_1 \cdots x_n$ be~$\Gamma$-reduced expressions for~$g'$ and~$x'$ respectively. WLOG we can assume that~$g_k$ and~$x_1$ have the same base vertex~$v$, i.e.~$g_k, x_1 \in \langle v \rangle$. Then
\begin{align*}
y & = g_1 \cdots g_k x_1 \cdots x_n g_k^{-1} \cdots g_1^{-1} \\
& = g_1 \cdots g_{k-1}(g_k x_1) x_2 \cdots x_n x_1(g_kx_1)^{-1} g_{k-1}^{-1} \cdots g_1^{-1}.
\end{align*}
One has~$g_k \neq x_1^{-1}$ as otherwise we could replace~$x'$ by~$x_2 \cdots x_n x_1$, which is a cyclic permutation of~$x$, and~$g'$ by~$g_1 \cdots g_{k-1}$. Clearly,~$x_2 \cdots x_n x_1 \in X^{T_{S(x)}}$ and
\[
g_1 \cdots g_{k-1} x_2 \cdots x_n x_1 g_{k-1}^{-1} \cdots g_1^{-1} = y
\]
which contradicts the choice of~$x'$ and~$g'$ as~$|g_1 \cdots g_{k-1}| < |g'|$.

If~$v \in S(x)$, then~$g_kxg_k^{-1} \in X^{T_{S(x)}}$ which again contradicts the choice of~$x'$ and~$g'$.

Hence,~$v \not \in S(x)$, and therefore~$\llt(g') = \llt(g'x_1)$ and also~$v \not \in \fl(x_2 \cdots x_nx_1)$. 

Note that if~$g_i$ can be shuffled to the end of~$g'$, then~$\{u, v\} \in E\Gamma$, where~$g_i \in \langle u \rangle$.

If~$w \in \fl(x_2 \cdots x_nx_1) \setminus \fl(x')$ then~$\{v, w\} \not \in E\Gamma$ and so~$w \not \in \llt(g'x_1)$.

From this we can conclude that~$v \not \in \llt(g'x_1) \cap \fl(x_2 \cdots x_n) \subseteq \llt(g') \cap \fl(x')$, hence~$\llt(g'x_1) \cap \fl(x_2 \cdots x_nx_1)$ is a proper subset of~$\llt(g') \cap \fl(x')$. Now we use induction hypothesis and we are done.

We have the equality~$g''x'' = y g''$, and the product~$g''x''$ is~$\Gamma$-reduced; this implies 
\[
|g''x''| = |g''| + |x''| = |g| + |x| = k + n.
\]
Also~$|yg''| \leq  |y| + |g''| = m + k$, where~$m = |y|$. However, we assumed that~$|x| \geq |y|$, i.e.~$n \geq m$, which together with the above inequality implies that~$n = m$, and hence~$y g''$ is~$\Gamma$-reduced as well. Let~$y_1 \cdots y_n$ be a~$\Gamma$-reduced expression for~$y$, and suppose that~$g_1 \cdots g_k$ and~$x_1 \cdots x_n$ are~$\Gamma$-reduced expressions for~$g''$ and~$x''$ respectively. We have:
\[
g_1 \cdots g_k x_1 \cdots x_n g_k^{-1} \cdots g_1^{-1} = y_1 \cdots y_n.
\]
The word~$g_1 \cdots g_k x_1 \cdots x_n g_k^{-1} \cdots g_1^{-1}$ cannot be~$\Gamma$-reduced by the normal form theorem. Assume that the syllable~$g_k^{-1}$ can be joined up with~$g_j$ for some~$1 \leq j \leq k$. This means that~$g_k$ shuffles with~$x_i$ for all~$1 \leq i \leq n$.

\textcolor{red}{This is a place where the proof breaks, as one can obtain something like}
\[
g_1 \cdots g_{k-1}g_k^2 x_1 \cdots x_n g_{k-1}^{-1} \cdots g_1^{-1} = y_1 \cdots y_n.
\]
If this occurs, we then have
\[ g_1 \cdots g_{k-1}g_k^2 x_1 \cdots x_n = y_1 \cdots y_ng_{k-1} \cdots g_1
\]
The left hand side is reduced since~$g''x''$ is reduced, and has syllable length~$n+k$. The right hand side is also reduced, since it is a subword of~$yg''$. However the syllable length here is~$n+k-1$, which contradicts the Normal Form Theorem. 

In particular,~$g_{i} \not \in S(x'')$ for all~$i$. FOLLOW ON FROM GEMMA PROOF. 
\end{proof}}
\begin{theorem}\label{thm:CP solvable}
    The conjugacy problem in T-RAAGs is solvable.
\end{theorem}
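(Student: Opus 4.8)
The plan is to convert the conjugacy criterion \autoref{prop:conj criteria} into a terminating algorithm. Given words $u,v \in S^{\ast}$ representing $x,y \in T(\Gamma)$, the procedure has three stages. First, reduce $u$ and $v$ to cyclically reduced words $w_x$ and $w_y$ representing cyclically reduced conjugates of $x$ and $y$. Second, compare the invariants $|\cdot|$, $\lambda(\cdot)$ and $\mathrm{supp}(\cdot)$ of $x$ and $y$ and reject immediately if any differ (these are necessary by the last line of \autoref{prop:conj criteria}). Third, compute the finite set $\mathcal{O}(w_x)$ of all cyclically reduced words obtainable from $w_x$ by finite sequences of (i) syllable shufflings, (ii) cyclic permutations, and (iii) full conjugate preserving $\varphi$-cyclic permutations, and accept if and only if $w_y \in \mathcal{O}(w_x)$. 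Correctness is exactly \autoref{prop:conj criteria}: $x \sim y$ iff the cyclically reduced words $w_x,w_y$ are linked by such a sequence, iff $w_y \in \mathcal{O}(w_x)$; and by \autoref{thm:NFT} any two reduced representatives of $y$ are related by operation (i) alone, so membership in $\mathcal{O}(w_x)$ does not depend on the choice of $w_y$.

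For the first stage, the word problem in $T(\Gamma)$ is solvable by \autoref{thm:NFT}: one computes a reduced form of any word in $S^{\ast}$ and checks whether it is empty. Thus we may take $u = g_1\cdots g_n$ reduced. If $u$ is not cyclically reduced then some cyclic permutation of it fails to be reduced, and (after a syllable shuffle) one may conjugate by a single syllable so as to merge two syllables with a common base vertex, strictly decreasing the syllable length while remaining in the conjugacy class. Iterating this terminates, since $\lambda$ strictly decreases at each step, and outputs a cyclically reduced word $w_x$ representing a conjugate of $x$; that the output is genuinely cyclically reduced follows from \autoref{cor:CR words}. Treat $v$ identically to obtain $w_y$. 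The second stage is a comparison of finite data and is clearly effective.

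The crux of the argument is the finiteness of $\mathcal{O}(w_x)$ asserted in the third stage. The point is that every one of the three operations preserves not only $\mathrm{supp}$ and $\lambda$, but the entire multiset $M(w) = \{(v_i,|a_i|)\}$ of pairs (base vertex, absolute value of exponent) over the syllables $g_i = v_i^{a_i}$ of $w$: a syllable shuffle replaces $a^m b^n$ by $b^{\pm n}a^{\pm m}$, a cyclic permutation merely reorders syllables, and a full $\varphi_s$-cyclic permutation sends each syllable $v_i^{a_i}$ to $v_i^{\pm a_i}$ by \autoref{def:Gemma_star_notation} --- in every case the base and the absolute exponent of each syllable are unchanged. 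Hence every word in the orbit is obtained from $w_x$ by permuting its $r \coloneqq \lambda(x)$ syllables and flipping some subset of exponent signs, giving at most $2^{r}\, r!$ candidates. Moreover each operation preserves being cyclically reduced: for (i) this is the cyclic-reduction lemma above, for (ii) it is immediate from the definition, and for (iii) it holds because whether two syllables can be joined depends only on their base vertices. So $\mathcal{O}(w_x)$ lies in this explicit finite set and can be computed by a breadth-first search: syllable shuffles and cyclic permutations are directly computable from a word, and a $\varphi_s$-cyclic permutation of a word $w$ with $\mathrm{supp}(w) = \mathrm{supp}(x)$ is only defined for the finitely many $s$ adjacent in $\Gamma$ to all of $\mathrm{supp}(x)$, with the admissibility condition $\varphi_{w}(s) = s$ decidable by evaluating $\varphi_w(s) \in \{s,s^{-1}\}$. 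The search halts once no new word is produced.

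The step I expect to be the main obstacle --- and the one the whole argument hinges on --- is this finiteness claim: that the orbit under the three conjugacy-preserving moves cannot escape the finite set of cyclically reduced words carrying the prescribed syllable data. This rests on the twin observations that the moves fix the (base, $|$exponent$|$) multiset, so exponents cannot grow, and that they preserve cyclic-reducedness. Granting these, together with \autoref{prop:conj criteria} and \autoref{thm:NFT}, correctness and termination of the procedure follow, which proves \autoref{thm:CP solvable}.
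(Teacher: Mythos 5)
Your proposal is correct and follows essentially the same route as the paper: cyclically reduce both inputs, then enumerate the finite set of cyclically reduced words reachable from one of them via syllable shuffles, cyclic permutations and FCP $\varphi$-cyclic permutations, and test membership of the other, with correctness given by \autoref{prop:conj criteria}. Your explicit justification of the finiteness of this orbit (via preservation of the multiset of base vertices and absolute exponents, bounding the orbit by $2^{r}\,r!$) is a welcome elaboration of a point the paper dispatches by simply citing \autoref{prop:conj criteria}.
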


\begin{proof}
    Our algorithm is as follows:

    \textbf{Input:} T-RAAG~$T(\ga)$, reduced words~$u,v \in S^{\ast}$. 

    \textbf{Step 1: Cyclic reduction}
    \begin{adjustwidth}{1.5cm}{}
        Cyclically reduce~$u$ and $v$, using \autoref{cor:CR words}.
    \end{adjustwidth}
    \textbf{Step 2: Set of representatives}
    \begin{adjustwidth}{1.5cm}{}
        Let~$D_{u}$ denote the set of all words obtained from~$u$ via syllable shuffles, cyclic permutations and FCP~$\varphi$-cyclic permutations. By \autoref{prop:conj criteria},~$D_{u}$ is finite, and it remains to check whether~$v \in D_{u}$. If~$v \in D_{u}$, then \textbf{Output} = \texttt{\color{blue}True}. Otherwise, \textbf{Output} = \texttt{\color{blue}False}.
    \end{adjustwidth}
    \vspace{-15pt}
\end{proof}
The following question remains open.
\begin{question}
    What is the time complexity of the conjugacy problem in T-RAAGs?
\end{question}
\begin{remark}
    The linear-time algorithm for RAAGs given by \citep{CGW} does not seem to extend to T-RAAGs. The machinery they use, namely \emph{pilings}, does not have an analogous definition for T-RAAGs, since we lose uniqueness of piling representatives. Moreover, their algorithm relies on the fact that two words in a cyclic normal form are conjugate in a RAAG if and only if they are equal up to a cyclic permutation. For T-RAAGs, this is not necessarily true. For example, in the Klein-bottle group~$K = \langle a, b \mid aba = b \rangle$, the words~$a$ and~$a^{-1}$ represent conjugate elements, and are cyclic normal forms, but are not equal up to cyclic permutation.  
\end{remark}

\subsection{Linear-time example}
We conclude this section by considering a subclass of T-RAAGs with a specific defining graph, of which we can determine the complexity of the conjugacy problem for the associated T-RAAG. This is due to a connection with another decision problem, known as the \emph{twisted conjugacy problem.}

\begin{definition}
    Let $G = \langle X \rangle$, let $u,v \in X^{\ast}$, and let $\varphi \in \mathrm{Aut}(G)$. We say $u$ and $v$ are \emph{$\varphi$-twisted conjugate} if there exists $w \in G$ such that $v =_{G} \varphi(w)^{-1}uw$. 

    The \emph{$\varphi$-twisted conjugacy problem} for $G$, denoted $\mathrm{TCP}_{\varphi}(G)$, takes as input two words $u,v \in X^{\ast}$, and decides whether these represent group elements which are $\varphi$-twisted conjugate in $G$. 
\end{definition}
Consider a T-RAAG $T(\ga)$ with vertices~$V\ga = \{x_{1}, \dots, x_{n}, y \}$, such that the only directed edges in~$\Gamma$ are those of the form~$[x_{i}, y\rangle$, for some~$i \in \{1, \dots, n\}$. In this case,~$T(\ga)$ is isomorphic to a cyclic extension of the RAAG~$A(\widehat{\ga})$, where~$\widehat{\ga}$ is the induced subgraph of~$\ga$ with vertices~$\{x_{1}, \dots, x_{n}\}$. In this scenario, the conjugacy problem in $T(\ga)$ can be solved in linear time. 
\begin{theorem}\label{thm:subcase result}
    Let~$T(\Gamma)$ be a T-RAAG such that 
    \[ T(\Gamma) \cong A(\widehat{\Gamma}) \rtimes_{\varphi} \Z = \langle V\widehat{\Gamma}, t \mid R(A(\widehat{\Gamma})), \; txt^{-1} = \varphi(x) \; (x \in V\widehat{\Gamma}) \rangle,
    \]
    where~$\varphi \in \mathrm{Aut}(A(\widehat{\Gamma}))$ is a composition of inversions. Then the conjugacy problem in~$T(\Gamma)$ can be solved in linear time. 
\end{theorem}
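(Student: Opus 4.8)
The plan is to reduce the conjugacy problem in $T(\Gamma)$ to the twisted conjugacy problem in the underlying RAAG $A(\widehat{\Gamma})$, and then invoke the known linear-time solution for the latter. Since $T(\Gamma) \cong A(\widehat{\Gamma}) \rtimes_\varphi \Z$, every element has a normal form $w t^k$ with $w \in A(\widehat{\Gamma})$ and $k \in \Z$; given an input word over $S^\pm$, computing this normal form (i.e. pushing all occurrences of $t^{\pm 1}$ to the right, applying $\varphi$ to the $A(\widehat{\Gamma})$-letters as they pass, and then reducing the $A(\widehat{\Gamma})$-part) is a linear-time preprocessing step. First I would record the standard fact about conjugacy in cyclic extensions: two elements $w_1 t^{k_1}$ and $w_2 t^{k_2}$ of $G \rtimes_\varphi \Z$ are conjugate if and only if $k_1 = k_2 =: k$ and $w_1$, $w_2$ are $\varphi^k$-twisted conjugate in $G$ — more precisely, conjugating by $g t^m$ sends $w t^k$ to $\varphi^m\!\bigl(g\, w\, \varphi^k(g)^{-1}\bigr) t^k$, so after absorbing the outer $\varphi^m$ (which is harmless since $\varphi^m$ is an automorphism, and in fact $\varphi^m \in \{\mathrm{id}, \varphi\}$ here as $\varphi^2 = \mathrm{id}$) the conjugacy question becomes exactly the $\varphi^k$-twisted conjugacy question for $w_1, w_2$ in $A(\widehat{\Gamma})$.

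Next I would observe that the relevant automorphisms are controlled: $\varphi$ is a composition of inversions (each generator $x_i$ with $[x_i, y\rangle \in E\Gamma$ is sent to $x_i^{-1}$, the rest fixed), so $\varphi^2 = \mathrm{id}$ and hence $\varphi^k$ is either $\mathrm{id}$ (when $k$ is even) or $\varphi$ (when $k$ is odd). Thus only two twisted conjugacy problems ever arise: the ordinary conjugacy problem in $A(\widehat{\Gamma})$ (solvable in linear time by \citep{CGW}), and $\mathrm{TCP}_\varphi(A(\widehat{\Gamma}))$ for this particular $\varphi$. The key input is then the result of \citep{Crowe2024}, which adapts the piling machinery of \citep{CGW} to solve the twisted conjugacy problem in RAAGs in linear time when the twisting automorphism lies in an appropriate class — and a composition of inversions is precisely such an automorphism (it is, in the terminology of \citep{Crowe2024}, a "graphical" automorphism up to the identity permutation, or one can check directly that it preserves the relevant piling structure). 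Invoking this gives a linear-time decision procedure for each of the two cases.

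Assembling the algorithm: on input $u, v \in S^\ast$, compute normal forms $u =_G w_1 t^{k_1}$, $v =_G w_2 t^{k_2}$ in linear time; if $k_1 \neq k_2$ output \texttt{False}; otherwise set $k = k_1$ and test whether $w_1$ and $w_2$ are $\varphi^k$-twisted conjugate in $A(\widehat{\Gamma})$ using \citep{CGW} if $k$ is even and \citep{Crowe2024} if $k$ is odd — each test is linear. Correctness follows from the cyclic-extension conjugacy criterion above. The main obstacle I anticipate is not any single hard step but rather the bookkeeping needed to make the reduction precise: verifying that the isomorphism $T(\Gamma) \cong A(\widehat{\Gamma}) \rtimes_\varphi \Z$ is explicit and that passing between a word over $S^\pm$ and the semidirect-product normal form is genuinely linear, and — more substantively — confirming that the automorphism $\varphi$ (a composition of inversions) falls within the hypotheses under which \citep{Crowe2024} proves linear-time solvability of the twisted conjugacy problem, since that paper's theorem may be stated for a specific class of automorphisms that one must check includes ours. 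If \citep{Crowe2024} only covers $\varphi$ up to conjugacy by an inner automorphism or up to composition with a graph symmetry, a short additional argument reducing our $\varphi$ to their setting would be needed.
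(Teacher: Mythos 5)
Your proposal follows essentially the same route as the paper: the paper reduces the conjugacy problem in $T(\Gamma)$ to the twisted conjugacy problem $\mathrm{TCP}_{\varphi}(A(\widehat{\Gamma}))$ via the cyclic-extension structure (citing \citep{Orbits} for the equivalence) and then invokes \citep[Theorem 4.1]{Crowe2024} for the linear-time solution with respect to compositions of inversions. The details you fill in — the semidirect-product normal form, the conjugation formula, the $\varphi^2=\mathrm{id}$ case split between ordinary conjugacy and $\mathrm{TCP}_{\varphi}$ — are exactly what those two citations package, and your flagged concern about $\varphi$ lying in the class handled by \citep{Crowe2024} is resolved affirmatively there.
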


\begin{proof}
    By \citep{Orbits} the conjugacy problem in~$T(\Gamma)$ is equivalent to the twisted conjugacy problem~$\mathrm{TCP}_{\varphi}(A(\widehat{\Gamma}))$, with respect to inversions. This is solvable in linear time by \citep[Theorem 4.1]{Crowe2024}.
\end{proof}

\section{Biautomaticity of T-RAAGs}\label{sec:biauto}
For our purposes, we will work with the following definition of biautomaticity given by Sarah Rees in \citep[Chapter 14]{Thurston2}. Recall, for $G = \langle X \rangle$, for~$k \in \mathbb{N}$ and words~$w,v \in (X^{\pm})^{\ast}$, we say~$w$ and~$v$ \emph{$k$-fellow travel} in~$G$, if for each~$i \leq \mathrm{max}\{\ell(w), \ell(v)\}$, one has~$|\mathrm{pre}_{i}(w)^{-1}\mathrm{pre}_{i}(v)| \leq k$ in~$\mathsf{Cay}(G,X)$. Equivalently we say the paths~$(1,w)$ and~$(1,v)$ of~$\mathsf{Cay}(G,X)$~$k$-fellow travel. 

\begin{definition}\label{defn:automatic}
    Let~$G = \langle X \rangle$. We define a \emph{language} for~$G$ over~$X^{\pm}$ to be a subset of~$(X^{\pm})^{\ast}$ that contains at least one representative for each element of~$G$. We say~$G$ is \emph{automatic} if 
    \begin{enumerate}
        \item[(A1)] there exists a regular language for~$G$ over~$X^{\pm}$, and 
        \item[(A2)] there exists~$k \in \Z$ such that, for each~$y \in X^{\pm} \cup \{1\}$, and for any~$w,v \in L$ with~$wy =_{G} v$, the paths~$(1, w), (1,v)$~$k$-fellow travel in~$\mathsf{Cay}(G,X)$. 
    \end{enumerate}
    We say~$G$ is \emph{biautomatic} if it is automatic and also satisfies the following:
    \begin{enumerate}
        \item[(A3)] there exists~$k \in \Z$ such that, for each~$y \in X^{\pm} \cup \{1\}$, and for any~$w,v \in L$ with~$yw =_{G} v$, the paths~$(1, w), (1,v)$~$k$-fellow travel in~$\mathsf{Cay}(G,X)$. 
    \end{enumerate}
\end{definition}
Biautomatic groups have solvable conjugacy problem \citep{Gersten1997}, however it is unknown whether automatic groups have solvable conjugacy problem. RAAGs were first shown to be biautomatic by \citep{VanWyk1994}. This was later shown by \citep{Hermiller1995} in the more general context of graph products. We take inspiration primarily from \citep{Hermiller1995}, to describe another method that implies biautomaticity in RAAGs, which we will then apply to twisted RAAGs. 

In \citep{Hermiller1995}, a finite state automaton is constructed which accepts a normal form for a RAAG~$A(\Gamma)$ over the monoid generating set~$V\Gamma$. We will adapt this automaton, which will then accept the normal form given by \citep{foniqi2024twisted} for T-RAAGs over~$V\Gamma$, and so T-RAAGs satisfy (A1) from \cref{defn:automatic}. We will then show that (A2) and (A3) both hold in T-RAAGs, with respect to the language accepted by our new automaton.

Set~$V\Gamma = \{v_{1}, \dots, v_{m}\}$. We define an order on the generating set~$V\Gamma^{\pm}$ as 
\[ v_{1} < v_{1}^{-1} < v_{2} < v_{2}^{-1} < \dots < v_{m} < v_{m}^{-1}.
\]
Our proposed automaton $\mathcal{M}_{\Gamma}$ is constructed as follows. 
\begin{enumerate}
    \item Fix a start state~$Q$, and construct~$2m$ accept states, which we label by the generators $V_{i}, V^{-1}_{i} \in V\ga^{\pm}$, for~$i = 1, \dots, n$. Add edges from $Q$ labelled $v_{i}$ (resp. $v^{-1}_{i}$) to each state $V_{i}$ (resp. $V^{-1}_{i}$), for all $1 \leq i \leq n$.    
    \item Add a loop at each state $V_{i}$ (resp. $V^{-1}_{i}$) labelled $v_{i}$ (resp. $v^{-1}_{i}$). Fix a fail state $F$, and add an edge from each state $V_{i}$ (resp. $V^{-1}_{i})$, labelled $v^{-1}_{i}$ (resp. $v_{i}$), to $F$. 
    \item For each $i,j \in \{1, \dots, m\}$, $i \neq j$, suppose that either $\{v_{i}, v_{j}\} \not \in E\Gamma$, or $\{v_{i}, v_{j}\} \in E\Gamma$ and $v^{\pm 1}_{i} < v^{\pm 1}_{j}$. Then we add edges from the state $V_{i}$ to the states $V_{j}$ and $V^{-1}_{j}$, labelled $v_{j}$ and $v^{-1}_{j}$ respectively. Similarly, we add edges from the state $V^{-1}_{i}$ to the states $V_{j}$ and $V^{-1}_{j}$, labelled $v_{j}$ and $v^{-1}_{j}$ respectively. If neither of these conditions hold, that is, $\{v_{i}, v_{j}\} \in E\Gamma$ and $v^{\pm 1}_{i} > v^{\pm 1}_{j}$, then we add edges from the state $V_{i}$ to $F$, labelled $v_{j}$ and $v^{-1}_{j}$, and similarly add edges from the state $V^{-1}_{i}$ to $F$, labelled $v_{j}$ and $v^{-1}_{j}$.
\end{enumerate}

We let~$\mathcal{M}_{\Gamma}$ denote the automaton built from this construction. By \cref{thm:NFT}, there exists a unique shortlex shortest word, with respect to the order on~$V\Gamma^{\pm}$ given above, representing each group element of~$T(\Gamma)$. To determine whether a word~$w = w_{1}\dots w_{n} \in V\Gamma^{\pm}$, where $w_{i} = v^{\ve}_{j_{i}}$ for some $i \in \{1, \dots, n\}$, $j_{i} \in \{1, \dots, m\}$, $\ve = \pm 1$, is the unique shortlex shortest representative for a group element in~$T(\Gamma)$, we read letters from left to right, and consider the following options when we read a letter~$w_{i}$, for~$1 \leq i \leq n$:

\begin{itemize}
    \item[$(i)$] We read the first letter $w_{1}$, which is accepted by $\mathcal{M}_{\Gamma}$; this corresponds to Step 1 of our construction above.  
    \item[$(ii)$] If~$w_{i} = w_{i-1}$, this letter is accepted and we read the next letter of~$w$. If~$w_{i} = w_{i-1}^{-1}$, then~$w$ cannot be a normal form, since~$w$ is not geodesic, and we reject $w$. In $\mathcal{M}_{\ga}$, this corresponds to Step 2 of our construction. 
    \item[$(iii)$] If either $\{v_{j_{i-1}}, v_{j_{i}}\} \not \in E\ga$, or $\{v_{j_{i-1}}, v_{j_{i}}\} \in E\ga$ and $v^{\pm 1}_{j_{i-1}} < v^{\pm 1}_{j_{i}}$, then we accept the subword $w_{1}\dots w_{i}$ and read the next letter of $w$. Otherwise, we have that $\{v_{j_{i-1}}, v_{j_{i}}\} \in E\ga$ and $v^{\pm 1}_{j_{i-1}} > v^{\pm 1}_{j_{i}}$, in which case the subword $w_{1}\dots w_{i-1}w_{i}$ is no longer shortlex shortest, since it can be rewritten as~$w_{1}\dots \varphi_{i-1}(w_{i})\varphi_{i}(w_{i-1})$ in~$T(\Gamma)$. In this case we reject $w$. All three scenarios correspond to Step 3 of our construction of $\mathcal{M}_{\ga}$. 
\end{itemize}
The following result is then immediate from the discussion above. 

\begin{cor}
    Let~$L$ be the language accepted by the automaton~$\mathcal{M}_{\Gamma}$. Then~$L$ is precisely the language of normal forms for~$T(\Gamma)$ over~$V\Gamma$. 
\end{cor}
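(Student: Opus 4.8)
The plan is to identify $L$ with the set $\mathrm{NF}(T(\Gamma))$ of \emph{shortlex-least} words over $S = V\Gamma^{\pm}$, shortlex being taken with respect to the fixed order $v_{1} < v_{1}^{-1} < \dots < v_{m} < v_{m}^{-1}$. First I would record why $\mathrm{NF}(T(\Gamma))$ really is ``the language of normal forms'': by \autoref{thm:NFT} every $g \in T(\Gamma)$ admits a reduced representative, and any two reduced representatives of $g$ differ only by syllable shuffling; since shortlex is a well-order on $S^{\ast}$, each class of the relation $=_{T(\Gamma)}$ contains a unique shortlex-least word, and these are exactly the normal forms used in \citep{foniqi2024twisted}. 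It therefore suffices to show that $\mathcal{M}_{\Gamma}$ accepts a word $w = w_{1}\cdots w_{n}$, with $w_{i} = v_{j_{i}}^{\varepsilon_{i}}$, if and only if $w$ is shortlex-least in its class.

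The heart of the argument is a \emph{local} characterisation: $w$ is shortlex-least if and only if for every $i \in \{2,\dots,n\}$ one has either (a) $w_{i} = w_{i-1}$; or (b) $v_{j_{i-1}} \neq v_{j_{i}}$ and $\{v_{j_{i-1}}, v_{j_{i}}\} \notin E\Gamma$; or (c) $v_{j_{i-1}} \neq v_{j_{i}}$, $\{v_{j_{i-1}}, v_{j_{i}}\} \in E\Gamma$, and $v_{j_{i-1}}^{\pm 1} < v_{j_{i}}^{\pm 1}$. For the ``only if'' direction I would argue contrapositively: if some $i$ violates all of (a)--(c), then either $w_{i-1} = w_{i}^{-1}$, so $w$ is not geodesic and hence not shortlex-least, or $\{v_{j_{i-1}}, v_{j_{i}}\} \in E\Gamma$ with $v_{j_{i-1}}^{\pm 1} > v_{j_{i}}^{\pm 1}$, in which case the shuffle $w_{i-1}w_{i} \mapsto \varphi_{w_{i-1}}(w_{i})\,\varphi_{w_{i}}(w_{i-1})$ (which by the lemma following \autoref{Gemma notation} preserves the group element, and by \autoref{Gemma notation} keeps the two underlying vertices, flipping only signs) produces a strictly shortlex-smaller word for the same element, because the order on $S$ puts every letter with underlying vertex $v_{j_{i}}$ before every letter with underlying vertex $v_{j_{i-1}}$; so $w$ is not shortlex-least. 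The ``if'' direction is the step I expect to require the most care: I would introduce the length-nonincreasing, element-preserving rewriting system on $S^{\ast}$ generated by the free reductions $v^{\varepsilon}v^{-\varepsilon} \to \varepsilon$ and the sorting moves $v_{j}^{\varepsilon}v_{k}^{\delta} \to \varphi_{v_{j}}(v_{k}^{\delta})\,\varphi_{v_{k}}(v_{j}^{\varepsilon})$ whenever $\{v_{j},v_{k}\} \in E\Gamma$ and $v_{j}^{\pm 1} > v_{k}^{\pm 1}$. Each rule strictly decreases a word in the shortlex well-order, so the system terminates; its irreducible words are exactly those satisfying (a)--(c) at every $i$; and \autoref{thm:NFT}, by pinning the ambiguity among reduced representatives down to syllable shuffling, forces each class of $=_{T(\Gamma)}$ to contain a unique irreducible word, which must then be its shortlex-least element. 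Hence the words satisfying (a)--(c) everywhere are precisely the shortlex-least words.

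It remains to observe that $\mathcal{M}_{\Gamma}$ is exactly the automaton that tests conditions (a)--(c) letter by letter, which is the content of the discussion preceding the statement: Step 1 of the construction puts every one-letter prefix into an accept state (every length-one word is shortlex-least); Step 2 realises case (a) via the loops at the states $V_{i}^{\pm 1}$ and sends the pattern $w_{i} = w_{i-1}^{-1}$ into the absorbing non-accepting state $F$; and Step 3 sends the transition labelled $w_{i}$ out of the state recording $w_{i-1}$ to an accept state precisely when (b) or (c) holds, and to $F$ otherwise. Since $F$ is absorbing and not accepting, $\mathcal{M}_{\Gamma}$ accepts $w$ if and only if the run never meets $F$, if and only if every $i$ satisfies (a)--(c), if and only if (by the previous paragraph) $w$ is shortlex-least. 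Therefore $L = \mathrm{NF}(T(\Gamma))$, the language of normal forms for $T(\Gamma)$ over $V\Gamma$.
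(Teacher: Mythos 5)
Your strategy is the same as the paper's (the paper treats the corollary as immediate from the letter-by-letter reading procedure), but the step you yourself flag as needing the most care --- that every word satisfying the pairwise conditions (a)--(c) is shortlex-least --- is exactly where the argument breaks, and the rewriting-system patch does not repair it. Your system is terminating, but it is not confluent, and \autoref{thm:NFT} does not force each $=_{T(\Gamma)}$-class to contain a unique irreducible word: the obstruction to shortlex-minimality can involve two letters that are far apart, with every intermediate letter lying in the star of their common vertex, so it is invisible to a two-letter window. Concretely, let $\Gamma$ be the undirected graph on $v_1 < v_2 < v_3$ with edges $\{v_1,v_2\}$ and $\{v_2,v_3\}$ only (so $T(\Gamma)=A(\Gamma)$), and take $w = v_2v_3v_1v_2$. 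Every consecutive pair satisfies your conditions: $(v_2,v_3)$ is adjacent with $v_2^{\pm1}<v_3^{\pm1}$, $(v_3,v_1)$ is non-adjacent, and $(v_1,v_2)$ is adjacent with $v_1^{\pm1}<v_2^{\pm1}$; hence $w$ is irreducible for your system and is accepted by $\mathcal{M}_\Gamma$. Yet $w =_{T(\Gamma)} v_2v_2v_3v_1$, which is also irreducible and accepted, and is strictly shortlex-smaller (the words agree in the first letter and then $v_2<v_3$). So this class contains two distinct irreducible words, the ``if'' direction of your local characterisation is false, and your claimed identification $L=\mathrm{NF}(T(\Gamma))$ does not follow; note also that $v_2v_3v_1v_2$ is not even reduced, since its two $v_2$-syllables can be joined across $v_3,v_1\in\st(v_2)$, so irreducibility for your rules does not even imply reducedness, which is why \autoref{thm:NFT} cannot be invoked in the way you do.

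The missing idea is that shortlex-minimality in (T-)RAAGs is not a condition on adjacent letters: a letter must be compared with every earlier letter past which it can be shuffled, not just its immediate predecessor, so either the rewriting rules must move a syllable past a whole commuting block at once, or the automaton must carry more memory than the last letter (in Hermiller--Meier's construction for graph products the states record, roughly, the set of generators currently blocked). To be fair to you, the paper's own justification (``immediate from the discussion above'') rests on the same two-letter window, and the automaton $\mathcal{M}_\Gamma$ as literally constructed in Steps 1--3 also accepts the word $v_2v_3v_1v_2$ above; so the gap you have reproduced is one the paper itself glosses over, and closing it requires either strengthening the automaton or proving a genuinely non-local characterisation of the normal form, neither of which your proposal (or the paper's sketch) supplies.
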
 
In particular, T-RAAGs satisfy (A1). To show (A2) and (A3), we first prove these conditions for RAAGs, with respect to the language accepted by our automaton~$\mathcal{M}_{\Gamma}$. 

\comm{
\begin{definition}
    Let~$G$ be a group with a finite symmetric generating set~$X$. Then~$G$ is \emph{automatic} if and only if~$G$ admits a bounded combing, such that the set of normal forms associated to the combing is the regular language of a finite state automaton with alphabet~$X$. 
\end{definition}

The following has been shown to be equivalent to the definition of a biautomatic group. 
\begin{definition}
    Let~$G$ be a group with finite symmetric monoid generating set~$X$. Then~$G$ is \emph{biautomatic} if~$G$ admits an automatic structure with regular language~$\mathcal{L}$, such that there exists another automatic structure for~$G$ which admits the set of formal inverses~$\mathcal{L}^{-1}$ as its regular language. 
\end{definition}

Whilst \citep{Hermiller1995} considers biautomaticity in graph products, we will summarise their method when considering RAAGs, before applying this same method to T-RAAGs. 

In \citep{Hermiller1995}, a finite state automaton is constructed which accepts a normal form for a RAAG~$A(\Gamma)$ over the monoid generating set~$V(\Gamma)$. This normal form is a geodesic bounded combing, and so we obtain that RAAGs are automatic. This construction can also be repeated with respect to `reverse proper words', to give an automatic structure correspnding to normal forms over the alphabet~$V(\Gamma)^{-1}$. These two construcions then give us biautomaticity. 

For completeness, we will summarise the construction of the finite state automaton which accepts the normal form over~$V(\Gamma)$. 

\textbf{Summary: Normal Form Automaton}\\
$V(\Gamma) = \{v_{1}, \dots, v_{n} \}$. Order~$v_{1} < v_{2} < \dots < v_{n}$. \\
\textbf{Step 1:} Fix a start state~$S$, and construct~$n$ accept states, each labelled~$V_{i}$ for~$i = 1, \dots, n$. Add an edge from~$S$ to each~$V_{i}$ labelled~$v_{i}$, and also add a loop at each node~$V_{i}$ labelled~$v_{i}$. Fix a fail state~$F$. \\

\textbf{Step 2:} For each state~$V_{i}$, we add the following edges (for all possible edges). 
\begin{itemize}
    \item If~$j \in \{1, \dots, n\}$,~$j \neq i$ such that either~$v_{i} < v_{j}$ or~$v_{i} > v_{j}$ and~$\{v_{i}, v_{j}\} \not \in E(\Gamma)$, then we add an edge labelled~$v_{j}$ from the state~$V_{i}$ to~$V_{j}$.
    \item If~$k \in \{1,\dots, n\}$ such that~$v_{i} > v_{k}$ and~$\{v_{i}, v_{k}\} \in E(\Gamma)$, then add an edge labelled~$v_{k}$ from the state~$V_{i}$ to the fail state~$F$. 
\end{itemize}
In RAAGs, the automaton described above accepts a normal form known as \emph{proper words.} These precisely match the normal form given in \citep{foniqi2022}. The remaining conditions to check then follow by the following result.
}

\begin{lemma}\label{prop:RAAGs A2 A3}
    RAAGs satisfy (A2) and (A3) with respect to the language~$L$ accepted by the automaton~$\mathcal{M}_{\Gamma}$. 
\end{lemma}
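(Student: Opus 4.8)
The claim is that RAAGs satisfy the fellow-traveller conditions (A2) and (A3) relative to the language $L$ of normal forms (shortlex-shortest words over $V\Gamma$) accepted by $\mathcal{M}_\Gamma$. Since $\varphi_v$ is trivial in a RAAG, the normal form is exactly the shortlex-least geodesic, and the key combinatorial fact we will use throughout is that any two geodesic representatives of an element of a RAAG differ by a sequence of syllable shuffles (commutations), so they have the same length and the same multiset of letters. I would first record this, together with the observation that if $w \in L$ and $s \in V\Gamma^\pm$ then $ws$ is either already geodesic (hence $l(|ws|) = l(w)+1$) or $w$ contains exactly one letter equal to $s^{-1}$ which can be shuffled to the end of $w$, in which case $|ws| = l(w)-1$. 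This is the standard ``at most one cancellation'' behaviour in RAAGs.

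\textbf{Proving (A2).} Let $w, v \in L$ with $wy =_{G} v$, $y \in V\Gamma^\pm \cup \{1\}$; the case $y = 1$ is trivial since then $w = v$. So take $y = s \in V\Gamma^\pm$. I would split into the two cases above. If $ws$ is geodesic then $v$ is a geodesic of the same length $l(w)+1$, so $v$ and $ws$ differ by shuffles; a shuffle moving the final letter $s$ leftward past a block of commuting letters shows that for each prefix length $i \le l(w)$, the prefixes $\mathrm{pre}_i(w)$ and $\mathrm{pre}_i(v)$ differ by moving $s$ at most $\le l(w)$ positions, but more precisely $\mathrm{pre}_i(w)^{-1}\mathrm{pre}_i(v)$ equals either $1$ or $s^{\pm 1}$ in $G$: indeed, $v$ is obtained from $ws$ by repeatedly swapping $s$ with the letter to its left, so at each prefix boundary the symmetric difference of the two prefixes (as multisets) is at most a single occurrence of $s^{\pm1}$. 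Hence $|\mathrm{pre}_i(w)^{-1}\mathrm{pre}_i(v)| \le 1$. In the cancelling case $|ws| = l(w)-1$, write $w = w' s^{-1} w''$ where $w''$ commutes with $s$ and $w' w''$ is a geodesic for $v$; a similar prefix-by-prefix comparison shows the prefixes differ by at most a bounded-length word (at most $2$), since one has to account both for the deleted $s^{-1}$ and the shuffled block. So (A2) holds with $k = 2$ (any explicit small constant works).

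\textbf{Proving (A3).} This is the genuinely new part — the left-multiplication fellow-traveller property — and it is where I expect the main obstacle to be, because left-multiplication by $s$ interacts with the \emph{whole} word rather than with its suffix. Let $w, v \in L$ with $sw =_G v$. Again either $sw$ is geodesic or there is a unique letter of $w$ equal to $s^{-1}$ that can be shuffled to the front. In the geodesic case, $v$ is a geodesic equal to $sw$, so $v = s \cdot (\text{shuffle of } w)$; comparing prefixes, $\mathrm{pre}_i(v)$ is $s$ followed by a length-$(i-1)$ prefix of a shuffle of $w$, while $\mathrm{pre}_i(w)$ is a length-$i$ prefix of $w$. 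The difference $\mathrm{pre}_i(w)^{-1}\mathrm{pre}_i(v)$ represents $s$ times (a word that is the ``new'' letters of the $v$-prefix minus the ``missing'' letters of the $w$-prefix); because $v$ and $sw$ agree and both are geodesic, the set of letters that have entered the length-$i$ window on the $v$ side but not yet on the $w$ side is controlled — but, unlike the right case, it need not be a single letter. The right way to bound it is to use that $w\in L$ is the \emph{shortlex-least} geodesic, which rigidly pins down the order of syllables: in the shortlex normal form, letters appear as early as the commutation relations permit and in increasing order among mutually commuting letters, so the discrepancy between $\mathrm{pre}_i(w)$ and the corresponding prefix of the shuffle $w'$ of $w$ with $v = sw'$ is bounded by the number of letters that commute with $s$ and sit ``around'' the insertion point — and this is at most the clique number of $\overline\Gamma$, a constant depending only on $\Gamma$. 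So I would set $k = \Delta + 2$ where $\Delta$ is the maximal size of a clique in $\overline{\Gamma}$ (equivalently, the maximal rank of a free-abelian parabolic), and verify prefix-by-prefix that $|\mathrm{pre}_i(w)^{-1}\mathrm{pre}_i(v)| \le k$. The cancelling case $|sw| = l(w)-1$ is handled the same way after first moving the unique $s^{-1}$ to the front and cancelling. The main obstacle, then, is making precise ``the discrepancy is bounded by the clique number'' — I would do this by an induction on $l(w)$ tracking, at each prefix boundary, exactly which letters of $w$ have been permuted past the inserted $s$ in passing from $sw$ to its shortlex normal form $v$, and observing that only letters commuting with $s$ (hence forming a clique together with the vertex of $s$) can be so permuted.
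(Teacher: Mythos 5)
Your treatment of (A2) is essentially the paper's argument. The paper writes $w = w_{1}y^{n}w_{2}$ with every letter of $w_{2}$ commuting with $y$ and greater than $y$ in the chosen order, so that $v = w_{1}y^{n+1}w_{2}$ is again in $L$; the commuting suffix then gives $|\mathrm{pre}_{i}(w)^{-1}\mathrm{pre}_{i}(v)| \le 2$ for all $i$. Your case split (geodesic versus single cancellation) amounts to the same decomposition, and your claimed bound of $1$ in the non-cancelling case should really be $2$ (the difference at a prefix boundary is $a^{-1}y$ for a letter $a$ of the commuting suffix, not just $y^{\pm 1}$), but this is harmless since any constant suffices.

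The (A3) part has a genuine gap. You try to bound $|\mathrm{pre}_{i}(w)^{-1}\mathrm{pre}_{i}(v)|$ for $sw =_{G} v$ by a constant depending on the clique number of $\Gamma$, but this quantity is genuinely unbounded: take $\Gamma$ with two non-adjacent vertices $a < b$ (so the clique number is $1$ and $\langle a,b\rangle$ is free), $w = b^{m}$, $y = a$, $v = ab^{m}$; then $\mathrm{pre}_{i}(w)^{-1}\mathrm{pre}_{i}(v) = b^{-i}ab^{i-1}$ is freely reduced of length $2i$, far exceeding your proposed $k = \Delta + 2 = 3$. The flaw is in the step ``only letters commuting with $s$ can be permuted past the inserted $s$'': that correctly locates where $s$ lands in the normal form of $sw$, but the group element $\mathrm{pre}_{i}(w)^{-1}\mathrm{pre}_{i}(v)$ is, up to one letter, a conjugate of $s$ by the portion of $w$ lying between the insertion point and position $i$, and that portion need not commute with $s$ at all. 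What the left fellow-traveller condition for biautomaticity actually requires (the paper's printed version of (A3) contains the same slip, which your example above shows cannot be what is meant) is a bound on $|\mathrm{pre}_{i}(w)^{-1}y^{-1}\mathrm{pre}_{i}(v)|$, i.e.\ the path $(1,v)$ is compared with the translate of $(1,w)$ based at $y$. With that correction the (A2) argument transfers verbatim: write $w = w_{1}y^{n}w_{2}$ where now $w_{1}$ is the maximal prefix whose letters commute with $y$ and precede it in the order; then $v = w_{1}y^{n+1}w_{2} \in L$ and $\mathrm{pre}_{i}(w)^{-1}y^{-1}\mathrm{pre}_{i}(v)$ has length at most $2$ for every $i$. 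No clique-number bound or induction on $l(w)$ is needed.
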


\begin{proof}
    Let~$w,v \in L$ such that~$wy =_{G} v$ for some~$y \in V\Gamma^{\pm}$. We can write~$w = w_{1}y^{n}w_{2}$ for some~$n \in \Z$, such that for every syllable~$g_{s}$ in~$w_{2}$, where~$g_{s} \in \langle v_t \rangle$ for some~$t \in \{1, \dots, n\}$, we have that~$(y, v_{t}) \in E\Gamma$, and~$y < v^{\pm 1}_{t}$ with respect to the order on the generating set. 

    Hence we can write~$v_{2} =_{G} w_{1}y^{n+1}w_{2}$, such that~$w_{1}y^{n+1}w_{2} \in L$. It is then immediate that~$w$ and~$v$ 2-fellow travel, and so (A2) holds for RAAGs. (A3) also holds using a similar proof.
\end{proof}
The following result will allow us to complete our proof of biautomaticity in T-RAAGs. 

\begin{theorem}\citep[Theorem 5.9]{foniqi2022}
    Let~$T(\Gamma)$ be a twisted RAAG, and let~$A(\Gamma)$ be its underlying RAAG. Then the Cayley graphs of~$T(\Gamma)$ and~$A(\Gamma)$ are isomorphic as undirected graphs, with respect to the standard generating set~$S = V\Gamma^{\pm}$. 
\end{theorem}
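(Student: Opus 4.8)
The plan is to exhibit an explicit graph isomorphism $\beta\colon\mathsf{Cay}(T(\Gamma),S)\to\mathsf{Cay}(A(\Gamma),S)$, $S=V\Gamma^{\pm}$, built from the normal form together with a sign correction that cancels the power-flips caused by shuffling past Klein edges. The starting observation is that $T(\Gamma)$ and $A(\Gamma)$ have the \emph{same} shortlex normal form language $L\subseteq S^{\ast}$: the automaton $\mathcal{M}_{\Gamma}$ constructed above refers only to the undirected edge set $E\Gamma$ of the underlying simplicial graph $\overline{\Gamma}$, never to the orientations, so by \cref{thm:NFT} it accepts the normal forms of $T(\Gamma)$, and by the identical argument it accepts those of the RAAG $A(\Gamma)=A(\overline{\Gamma})$. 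Thus $g\mapsto w_{g}$ (the unique shortlex normal form of $g$) identifies each of $T(\Gamma)$ and $A(\Gamma)$ with $L$ as a set. Crucially, the identity map on $L$ is \emph{not} a Cayley-graph isomorphism, since shuffling a generator past a Klein neighbour flips powers.

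First I would define a sign correction. For $w=u_{1}\cdots u_{n}\in L$, with $u_{i}$ a letter on the vertex $v_{i}$, set $\alpha(w)=u_{1}'\cdots u_{n}'$, where $u_{i}'$ has vertex $v_{i}$ and sign that of $u_{i}$ multiplied by $(-1)^{k_{i}}$, $k_{i}=\#\{\,j<i:[v_{i},v_{j}\rangle\in E\Gamma\,\}$; equivalently $u_{i}'=\varphi_{p}(u_{i})$ where $p$ is the subword of $u_{1}\cdots u_{i-1}$ formed by the letters on vertices in $\lk(v_{i})$. Since $k_{i}$ depends only on the sequence of vertices (and no vertex lies on a Klein edge to itself), the same recipe defines an involution of $S^{\ast}$ that fixes the vertex sequence and carries each adjacent pair $ss^{-1}$ to an adjacent pair of the same type; hence the defining conditions of $\mathcal{M}_{\Gamma}$ are preserved and $\alpha(L)=L$. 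Now let $\beta(g)$ be the element of $A(\Gamma)$ represented by $\alpha(w_{g})$. As $\alpha(w_{g})\in L$ is the shortlex normal form of $\beta(g)$, the map $\beta$ is a bijection $T(\Gamma)\to A(\Gamma)$, with inverse $h\mapsto(\text{the element of }T(\Gamma)\text{ represented by }\alpha(w_{h}))$.

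It remains to see that $\beta$ preserves adjacency. As both Cayley graphs are simple and $2|V\Gamma|$-regular and $\beta$ is a vertex bijection, it suffices to prove that for every $g\in T(\Gamma)$ and $v\in V\Gamma$ one has $\{\beta(gv),\beta(gv^{-1})\}=\{\beta(g)v,\beta(g)v^{-1}\}$. I would do this by describing $w_{gv^{\pm1}}$ from $w_{g}=u_{1}\cdots u_{n}$ via the standard normalisation: shuffle the new letter $v^{\pm1}$ leftwards through the maximal suffix of $u_{1}\cdots u_{n}$ on vertices in $\lk(v)$, during which it acquires a sign flip $\sigma\in\{\pm1\}$ depending only on that suffix and the edge data, while the suffix letters that are origins of Klein edges into $v$ have their signs flipped (again independently of the starting sign); then it either cancels or merges against a $v$-syllable, or settles into a new syllable at the shortlex-correct spot. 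Which of these happens, and the resulting vertex sequence, is exactly as in the RAAG $A(\Gamma)$, being governed by $\overline{\Gamma}$ alone. Comparing with $\beta(g)v^{\pm1}$, whose RAAG normal form arises from $\alpha(w_{g})$ by the same untwisted process, one checks that the corrections built into $\alpha$ are calibrated precisely to absorb the shuffle-induced flips, so that $\alpha(w_{gv^{\varepsilon}})=w_{\beta(g)v^{\varepsilon''}}$ for the appropriate $\varepsilon''\in\{\pm1\}$; hence $\beta(gv^{\varepsilon})=\beta(g)v^{\varepsilon''}$, which gives the claim and therefore the theorem.

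The hard part will be this final calibration: tracking, letter by letter, the flips produced while normalising $w_{g}\cdot v^{\pm1}$ in $T(\Gamma)$ against the corrections $\alpha$ applies to $w_{gv^{\pm1}}$, with particular care in the case where the inserted letter annihilates an existing $v$-syllable, so that the correction must be carried through a deletion and a re-sorting of the tail; and also in verifying that $\alpha$ is compatible with syllable shuffling, which is what makes $\beta$ well defined on group elements rather than merely on normal-form words. If that bookkeeping becomes unwieldy, an alternative is an induction on $|V\Gamma|$: when $\Gamma$ is not complete, choose a non-cone vertex $v$ and use $T(\Gamma)=T(\Gamma\setminus v)\ast_{T(\lk v)}T(\st v)$ and the analogous splitting of $A(\Gamma)$ to present both Cayley graphs as trees of Cayley graphs of smaller twisted RAAGs glued along parabolic cosets; when $\Gamma$ is complete, peel off a vertex $v_{0}$ and note that, according to the orientations at $v_{0}$, $T(\Gamma)$ is an extension $\mathbb{Z}\rtimes T(\Gamma\setminus v_{0})$ or $T(\Gamma\setminus v_{0})\rtimes\mathbb{Z}$, so that $\mathsf{Cay}(T(\Gamma),S)$ is the Cartesian product with $\mathbb{Z}$ of the Cayley graph of $T(\Gamma\setminus v_{0})$ on its vertex generators, matching $\mathsf{Cay}(A(\Gamma),S)$.
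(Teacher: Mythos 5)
The paper does not actually prove this statement -- it is imported wholesale from \citep[Theorem 5.9]{foniqi2022} -- so there is no in-paper argument to measure you against; I can only assess your construction on its own terms, and I believe it is correct. Your two key observations are sound: the shortlex normal-form language $L$ is the same for $T(\Gamma)$ and $A(\overline{\Gamma})$ because both reducedness and shortlex-minimality depend only on the vertex sequence (the letter order $v_1<v_1^{-1}<v_2<\cdots$ ensures that sign flips never change which of two shuffled words is lexicographically smaller), and the sign correction $\alpha$ with $k_i=\#\{j<i:[v_i,v_j\rangle\in E\Gamma\}$ is exactly the right twist. The calibration you flag as the hard part does close up: appending $v^{\epsilon}$ to $w_g=u_1\cdots u_n$ and normalising multiplies the travelling letter by $(-1)^{t}$ with $t$ the number of suffix letters on vertices $u$ with $[v,u\rangle\in E\Gamma$, while $\alpha$ contributes $(-1)^{k}$ from the prefix, and $t+k=N_v(g)$, the total count over all of $w_g$, which is independent of $\epsilon$ and of whether the letter extends, creates, or cancels against a $v$-syllable; the suffix letters flipped by $\varphi_v$ during the shuffle are un-flipped by the change of $\pm 1$ in their own $\alpha$-count. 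One gets $\beta(gv^{\epsilon})=\beta(g)v^{\epsilon(-1)^{N_v(g)}}$, which is precisely your set identity. Moreover, the annihilation case you worry about is harmless: for the appended letter to reach a $v$-syllable at position $q$, every letter after position $q$ must lie on a vertex adjacent to $v$, and shortlex-minimality of $w_g$ then forces each such letter either to be non-adjacent to, or to have strictly larger vertex than, whatever precedes the deleted syllable -- otherwise $w_g$ would already admit a shorter or lexicographically smaller representative. Hence no re-sorting or re-joining occurs after the deletion, and $\alpha$ passes through it cleanly. So your plan is a genuine proof modulo writing out this bookkeeping; the only part I would drop is the closing inductive alternative, whose amalgam decomposition $T(\Gamma)=T(\Gamma\setminus v)\ast_{T(\mathrm{Lk}(v))}T(\mathrm{St}(v))$ relies on parabolic-subgroup facts not established here and is in any case unnecessary.
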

In particular, distances are preserved between~$\mathsf{Cay}(A(\Gamma), S)$ and~$\mathsf{Cay}(T(\Gamma), S)$, and so T-RAAGs also satisfy (A2) and (A3), using \Cref{prop:RAAGs A2 A3}. The following is then immediate, which gives an alternative positive solution to the conjugacy problem in T-RAAGs.  

\begin{theorem}\label{thm:biauto}
    T-RAAGs are biautomatic.
\end{theorem}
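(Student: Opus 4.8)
The statement to prove is \cref{thm:biauto}: T-RAAGs are biautomatic. By the point this theorem appears, essentially all the work has been set up, so the proof is a short assembly of the preceding results.

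\textbf{Plan of proof.} The strategy is to verify the three conditions (A1), (A2), (A3) from \cref{defn:automatic} for an arbitrary T-RAAG $T(\Gamma)$ with respect to the standard generating set $S = V\Gamma^{\pm}$ and the language $L$ accepted by the automaton $\mathcal{M}_{\Gamma}$. First I would recall that (A1) is already established: the \cref{cor} immediately preceding shows that $L$ is precisely the set of shortlex-normal-form representatives for $T(\Gamma)$, and since $L$ is accepted by the finite-state automaton $\mathcal{M}_{\Gamma}$ it is regular and contains exactly one representative per group element. Next, for (A2) and (A3), the key tool is \citep[Theorem 5.9]{foniqi2022} as quoted in the excerpt: the Cayley graphs $\mathsf{Cay}(T(\Gamma), S)$ and $\mathsf{Cay}(A(\Gamma), S)$ of the T-RAAG and its underlying RAAG are isomorphic as undirected graphs. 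Since fellow-traveller conditions only involve undirected distances in the Cayley graph — the quantity $|\mathrm{pre}_i(w)^{-1}\mathrm{pre}_i(v)|$ is just the distance between two vertices — this isomorphism transports the fellow-traveller property verbatim from $A(\Gamma)$ to $T(\Gamma)$.

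\textbf{Key steps in order.} (1) Observe that the automaton $\mathcal{M}_{\Gamma}$ depends only on the underlying simplicial graph $\overline{\Gamma}$ and the chosen order on vertices, hence it is literally the same automaton one would build for the RAAG $A(\overline{\Gamma})$; in particular the language $L$ is the same set of words whether viewed as normal forms for $T(\Gamma)$ or for $A(\overline{\Gamma})$. (2) Invoke \cref{prop:RAAGs A2 A3}: the RAAG $A(\overline\Gamma)$ satisfies (A2) and (A3) with respect to $L$, so there is a fellow-traveller constant $k$ (one may take $k=2$) such that for all $w,v \in L$ and $y \in S \cup \{1\}$ with $wy =_{A(\overline\Gamma)} v$ (respectively $yw =_{A(\overline\Gamma)} v$), the paths $(1,w)$ and $(1,v)$ $k$-fellow travel in $\mathsf{Cay}(A(\overline\Gamma),S)$. (3) Under the graph isomorphism $\Phi\colon \mathsf{Cay}(A(\overline\Gamma),S) \to \mathsf{Cay}(T(\Gamma),S)$, which fixes the base vertex and sends the $w$-labelled path to the $w$-labelled path (the two Cayley graphs have the same vertices and the same edge relation, differing only possibly in edge labels/orientations), conclude that $(1,w)$ and $(1,v)$ also $k$-fellow travel in $\mathsf{Cay}(T(\Gamma),S)$, so (A2) and (A3) hold for $T(\Gamma)$. (4) Combine (A1), (A2), (A3) to conclude $T(\Gamma)$ is biautomatic.

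\textbf{Main obstacle.} There is no serious obstacle left at the level of \cref{thm:biauto} itself — all the substance is in the earlier results (\cref{thm:NFT}, the construction of $\mathcal{M}_\Gamma$, \cref{prop:RAAGs A2 A3}, and \citep[Theorem 5.9]{foniqi2022}). The one point requiring a word of care is making precise that the fellow-traveller constant is genuinely preserved by the Cayley graph isomorphism: one must check that the isomorphism of undirected Cayley graphs sends the edge-path traced by reading $w$ from the identity to the edge-path traced by reading $w$ from the identity in the target, so that prefixes map to prefixes. This follows because the isomorphism of \citep[Theorem 5.9]{foniqi2022} is the identity on vertices (elements of the group), and reading a generator $s \in S$ moves along an edge in both graphs; hence $\mathrm{pre}_i(w)$ as a group element is the same on both sides, and the distance $|\mathrm{pre}_i(w)^{-1}\mathrm{pre}_i(v)|$ is unchanged. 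Once this is noted, the proof is a one-line assembly.

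\begin{proof}[Proof of \cref{thm:biauto}]
We verify (A1), (A2), (A3) of \cref{defn:automatic} for $T(\Gamma)$ with respect to $S = V\Gamma^{\pm}$ and the language $L$ accepted by $\mathcal{M}_{\Gamma}$. Condition (A1) holds by the preceding \cref{cor}: $L$ is regular, being accepted by the finite-state automaton $\mathcal{M}_{\Gamma}$, and it consists of exactly one (shortlex) normal form per element of $T(\Gamma)$ by \cref{thm:NFT}. For (A2) and (A3), note that $\mathcal{M}_{\Gamma}$ is constructed purely from the underlying simplicial graph $\overline{\Gamma}$ and the fixed order on $V\Gamma^{\pm}$; hence $L$ is simultaneously the normal-form language for the underlying RAAG $A(\overline{\Gamma})$. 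By \cref{prop:RAAGs A2 A3}, there is a constant $k$ (one may take $k = 2$) such that, for every $y \in S \cup \{1\}$ and all $w, v \in L$ with $wy =_{A(\overline{\Gamma})} v$ (respectively $yw =_{A(\overline{\Gamma})} v$), the paths $(1, w)$ and $(1, v)$ $k$-fellow travel in $\mathsf{Cay}(A(\overline{\Gamma}), S)$. By \citep[Theorem 5.9]{foniqi2022}, $\mathsf{Cay}(T(\Gamma), S)$ and $\mathsf{Cay}(A(\overline{\Gamma}), S)$ are isomorphic as undirected graphs via a map which is the identity on vertices; in particular, for any word $u \in S^{\ast}$ and any $i$, the prefix $\mathrm{pre}_{i}(u)$ represents the same distance-geometry data in both Cayley graphs, so $|\mathrm{pre}_{i}(w)^{-1}\mathrm{pre}_{i}(v)|$ is the same computed in either graph. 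Hence the $k$-fellow-traveller property transfers verbatim, and $T(\Gamma)$ satisfies (A2) and (A3). Therefore $T(\Gamma)$ is biautomatic.
\end{proof}
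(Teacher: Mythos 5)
Your proof is correct and follows essentially the same route as the paper: (A1) from the corollary describing the language accepted by $\mathcal{M}_{\Gamma}$, conditions (A2) and (A3) for the underlying RAAG via \cref{prop:RAAGs A2 A3}, and transfer to $T(\Gamma)$ via the undirected Cayley-graph isomorphism of \citep[Theorem 5.9]{foniqi2022}. The one imprecision --- asserting that the isomorphism is ``the identity on vertices'' and that the relevant pairs $(w,v)$ with $wy =_{T(\Gamma)} v$ coincide with those satisfying $wy =_{A(\overline{\Gamma})} v$, when the two groups are genuinely different and the isomorphism is only guaranteed to preserve undirected distances --- is present at the same level of terseness in the paper's own argument.
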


\comm{
\subsection{Potentially simpler solution}
\begin{enumerate}
    \item Construct automaton which accepts normal form for RAAGs~$A(\Gamma)$ with respect to~$V(\Gamma)^{\pm}$.
    \item Show the language it accepts~$k$-fellow travel for both right and left multiplication of generators. Note these two facts give us (again) biautomaticity in RAAGs.
    \item Apply to T-RAAGs:
    \begin{enumerate}
        \item Show the automaton constructed before also accepts normal form for~$T(\Gamma)$
        \item Since Cayley graph Q.I., this should imply~$k$-fellow traveller for~$T(\Gamma)$ as well. Even stronger statement: use isomorphism (as undirected graphs) between Cayley graph of~$A(\Gamma)$ and~$T(\Gamma)$. 
    \end{enumerate}
\end{enumerate}}

\noindent{\textbf{{Acknowledgments.}}} 
Islam Foniqi acknowledges support from the EPSRC Fellowship grant EP/V032003/1 ‘Algorithmic, topological and geometric aspects of infinite groups, monoids and inverse semigroups’.
\newpage
\noindent\textit{\\ Gemma Crowe,\\
University of Manchester, and the Heilbronn Institute for 
Mathematical Research, \\
Bristol, UK\\ }
{Email: gemma.crowe@manchester.ac.uk}

\noindent\textit{\\ Islam Foniqi,\\
The University of East Anglia\\ 
Norwich (United Kingdom)\\}
{Email: i.foniqi@uea.ac.uk}

\bibliography{main}

\end{document}